\documentclass[12pt,oneside,reqno]{amsart}

\hoffset=-0.7in \textwidth=6.4in \textheight=8.8in

\usepackage{amsmath,amssymb,amsthm,textcomp,mathtools}
\usepackage{amsfonts,graphicx}
\usepackage[mathscr]{eucal}
\pagestyle{plain}
\usepackage{color}
\usepackage{csquotes}
\usepackage{enumitem}
\vfuzz=30pt

\interdisplaylinepenalty=0
\allowdisplaybreaks

\numberwithin{equation}{section}

\theoremstyle{definition}

%\addtolength{\evensidemargin}{-.1in}
%\addtolength{\textwidth}{-0.1in}
\addtolength{\topmargin}{-0.7in}
\addtolength{\textheight}{0.4in}

\numberwithin{equation}{section}

%\date{}

%\newcommand{\eqd}{\stackrel{d}{=}}

\newtheorem{theorem}{\bf Theorem}[section]
\newtheorem{remark}{\bf Remark}[section]
\newtheorem{proposition}{Proposition}[section]

\newtheorem{definition}{Definition}[section]
\newtheoremstyle
{remarkstyle}
{}
{11pt}
{}
{}
{\bfseries}
{:}
{     }
{\thmname{#1} \thmnumber{#2} }

\theoremstyle{remarkstyle}

\begin{document}
\title{On Multiparameter Generalized Counting Process and its Time-Changed Variants}
\author[Manisha Dhillon]{Manisha Dhillon}
\address{Manisha Dhillon, Department of Mathematics, Indian Institute of Technology Bhilai, Durg 491002, India.}
\email{manishadh@iitbhilai.ac.in}
\author[Kuldeep Kumar Kataria]{Kuldeep Kumar Kataria}
\address{Kuldeep Kumar Kataria, Department of Mathematics, Indian Institute of Technology Bhilai, Durg 491002, India.}
\email{kuldeepk@iitbhilai.ac.in}
\subjclass[2020]{Primary : 60G51, 60G44; Secondary: 60G60}
\keywords{multiparameter generalized counting process; additive L\'evy process; multivariate stable subordinator}
\date{\today}
\begin{abstract}
We introduce and study a multiparameter version of the generalized counting process (GCP), where there is a possibility of finitely many arrivals simultaneously. We call it the multiparameter GCP. In a particular case, it is uniquely represented as a weighted sum of independent multiparameter Poisson processes. For a specific case, we establish a relationship between the multiparameter GCP and the sum of independent GCPs. Some of its time-changed variants are studied where the time-changing components used are the multiparameter stable subordinator and the multiparameter inverse stable subordinator. An integral of the multiparameter GCP is defined, and its asymptotic distribution is obtained. 
\end{abstract}

\maketitle

Multiparameter random processes are extension of one-parameter stochastic processes that intersect with various other branches of mathematics and physics. In \cite{Khoshnevisan2002a}, their connections to group theory, analytic number theory, real and functional analysis are discussed. The multiparameter random processes have potential applications in various fields, for example, mathematical statistics, statistical mechanics and cognitive science, \textit{etc.} (see \cite{Cao1999}). For more details on multiparameter random process, we refer the reader to \cite{Barndorff2001}, \cite{Merzbach1986}, and references therein.

The presence of fractional derivatives in a dynamical system induces a global memory effect in it. So, the time-changed random processes commonly known as the fractional random processes are of particular interest in the theory of stochastic processes. In the past two decades, the time-changed Poisson processes have been extensively studied (see \cite{Beghin2009},  \cite{Leskin2003}, \cite{Meerschaert2011}). The one dimensional distributions of these time-changed processes are governed by a system of differential equations involving fractional derivatives. For a similar study on the time-changed Poisson random fields indexed by $\mathbb{R}_+^2$, we refer the reader to \cite{Beghin2020} and \cite{Kataria2024}.

Di Crescenzo {\it et} {\it al.} \cite{Di Crescenzo2016} introduced and studied a L\'evy process $\{M(t)\}_{t\geq0}$, namely, the generalized counting process (GCP). It is a generalization of the Poisson process in the sense that it performs independently $k$ kinds of jumps of amplitude $1,2,\dots,k$ with positive rates $\lambda_1,\lambda_2,\dots,\lambda_k$, respectively.  Its state probabilities $p(n,t)=\mathrm{Pr}\{M(t)=n\}$ solve the following system of differential equations:
\begin{equation*}
	\frac{\mathrm{d}}{\mathrm{d}t}p(n,t)=-\sum_{j=1}^{k}\lambda_j p(n,t)+\sum_{j=1}^{n\wedge k}\lambda_j p(n-j,t), \ n\geq0,
\end{equation*}
with initial conditions $p(0,0)=1$ and $p(n,0)=0$ for all $n\geq1$. Here, $n\wedge k=\min\{n,k\}$. Its state probabilities are given by 
\begin{equation*}
	p(n,t)=\sum_{\Omega(k,n)}\prod_{j=1}^{k}\frac{(\lambda_{j}t)^{x_{j}}}{x_{j}!}e^{-\lambda_j t},\ \ n\ge 0,
\end{equation*}
where $\Omega(k,n)=\{(x_{1},x_{2},\dots,x_{k}):\sum_{j=1}^{k}jx_{j}=n,\ x_{j}\in\mathbb{N}_0\}$. 
Thus, the GCP is equal in distribution to a weighted sum of $k$ independent Poisson processes (see \cite{Kataria2022a}). That is, 
\begin{equation*}
	M(t)\overset{d}{=}\sum_{j=1}^{k}jN_j(t),
\end{equation*}
where $\{N_j(t)\}_{t\geq0}$'s are independent Poisson process with intensity parameter $\lambda_j>0$. Here, $\overset{d}{=}$ denotes equal in distribution. For more details on GCP and its time-changed variants, we refer the reader to \cite{Dhillon2024}, \cite{Kataria2022a}, \cite{Kataria2022b}, \cite{Kataria2022}, and references therein. 

Recently, Vishwakarma and Kataria \cite{Vishwakarma2025} introduced and studied a multiparameter version of the Poisson process. For a particular case, it is shown that the multiparameter Poisson process has a unique representation as the sum of independent one parameter Poisson processes. Its integral is studied, and its several martingale characterizations are obtained. 

In this paper, we introduce and study a multiparameter version of the GCP and its time-changed variants. The paper is organized as follows: In Section \ref{pre}, we give some preliminary results and set notations that will be used later. In Section \ref{sec3}, we define a multiparameter generalized counting process (multiparameter GCP) with index set $\mathbb{R}^d_+$, $d\ge1$. It is a multiparameter counting process with independent and stationary increments whose one dimensional distributions coincide with that of a GCP. Its various distributional properties such as the one dimensional distributions, probability generating function (pgf) and their governing system of differential equations are derived.  Its integral over rectangle in $\mathbb{R}^d_+$ is defined and its unique representation is established. An asymptotic distribution of this integral process is obtained. In Section \ref{sec4}, we study two time-changed variants of the multiparameter GCP where the time changing components are multiparameter stable subordinator and multiparameter inverse stable subordinator. The explicit forms of their one dimensional distributions, means and variances are derived. 

\section{Preliminaries}\label{pre}
In this section, we give some preliminary details on Mittag-Leffler function, fractional derivatives, and stable subordinator, multiparameter stable subordinator and their inverse.

First, we set some notations that will be used throughout the paper.
\subsection{Notations}  Let $\mathbb{R}_+$ denote the set of non-negative real numbers, that is, $\mathbb{R}_+=[0,\infty)$, and let $\mathbb{R}^d_+$, $d\ge1$ be the set of $d$-dimensional vectors with non-negative real entries. Also, let $\mathbb{N}_0=\mathbb{N}\cup\{0\}$. 
\begin{itemize}
	\item The random vector and the real vector are denoted by bold uppercase alphabets and bold lowercase alphabets, respectievly. 
	The zero vector in $\mathbb{R}^d_+$ is denoted by $\textbf{0}$, and $\textbf{1}=(1,1,\dots,1)$ is a $d$-dimensional vector.
	\item The dot product of two elements $\textbf{t}=(t_1,t_2,\dots,t_d)$ and $\textbf{s}=(s_1,s_2,\dots,s_d)$ in $\mathbb{R}^d_+$ is the usual dot product on $\mathbb{R}^d_+$, that is, $\textbf{t}\cdot \textbf{s}=\sum_{i=1}^{d}t_is_i$. Also, the difference of these two elements is their component-wise difference, that is, $\textbf{t}-\textbf{s}=(t_1-s_1,t_2-s_2,\dots,t_d-s_d)$.
	
	\item Let $c\in\mathbb{R}$ and $\textbf{t}\in\mathbb{R}^d_+$. Then, the scalar product $c\textbf{t}$ is the standard component-wise product, that is,   $c\textbf{t}=(ct_1,ct_2,\dots,ct_d)$.
	
	\item  For any $\textbf{t}\in \mathbb{R}^d_+$ and $\textbf{s}\in\mathbb{R}^d_+$, let $\textbf{s}\preceq \textbf{t}$ denote $s_i\leq t_i$ for all $i\in\{1,2,\dots,d\}$. That is, the usual partial ordering on $\mathbb{R}^d_+$ is considered. 
	
\end{itemize}
\subsection{Mittag-Leffler function}
The three-parameter Mittag-Leffler function is defined as (see \cite[Eq. (1.9.1)]{Kilbas2006})
\begin{equation}\label{mitag}
	E_{\alpha,\beta}^{\gamma}(x)\coloneqq\frac{1}{\Gamma(\gamma)}\sum_{j=0}^{\infty} \frac{\Gamma(j+\gamma)x^{j}}{j!\Gamma(j\alpha+\beta)},\ x\in\mathbb{R},
\end{equation}
where $\alpha>0$, $\beta>0$ and $\gamma>0$. For $\gamma=1$, it reduces to the two-parameter Mittag-Leffler function. Further, for $\gamma=\beta=1$, we get the Mittag-Leffler function. 
\subsection{Fractional derivatives}
For $\gamma\geq 0$, the Riemann-Liouville fractional derivative is defined as follows (see \cite{Kilbas2006}):
\begin{equation*}
	\partial_t^\gamma f(t):=\left\{
	\begin{array}{ll}
		\dfrac{1}{\Gamma{(m-\gamma)}}\displaystyle \frac{\mathrm{d}^{m}}{\mathrm{d}t^{m}}\int^t_{0} \frac{f(s)}{(t-s)^{\gamma+1-m}}\,\mathrm{d}s,\ m-1<\gamma<m,\vspace{.2cm}\\
		\displaystyle\frac{\mathrm{d}^{m}}{\mathrm{d}t^{m}}f(t),\  \gamma=m,
	\end{array}
	\right.
\end{equation*}
where $m$ is a positive integer.

The Caputo fractional derivative is defined as follows (see \cite{Kilbas2006}):
\begin{equation}\label{caputo}
	\mathcal{D}_t^\alpha f(t)\coloneqq\left\{
	\begin{array}{ll}
		\dfrac{1}{\Gamma{(1-\alpha)}}\displaystyle\int^t_{0} (t-s)^{-\alpha}f'(s)\,\mathrm{d}s, \ 0<\alpha<1,\\\\
		f'(t),\ \alpha=1.
	\end{array}
	\right.
\end{equation}
The following result holds true (see \cite[Eq. (3.1)]{Meerschaert2013}): 
\begin{equation*}
	\mathcal{D}_t^\alpha f(t)=\partial_t^\alpha f(t)-\frac{t^{-\alpha}}{\Gamma(1-\alpha)}f(0^+),\ \alpha\in(0,1).
\end{equation*}

\subsection{Stable subordinator and its inverse}\label{alphastab}
A subordinator $\{D^\alpha(t)\}_{t\ge0}$, $\alpha\in(0,1)$ is called $\alpha$-stable if its Laplace transform is given by $\mathbb{E}e^{-wD^\alpha(t)}=e^{-tw^\alpha}$, $w>0$. Its first passage time process $\{L^\alpha(t)\}_{t\ge0}$, that is,
\begin{equation*}
	L^\alpha(t)\coloneqq\inf\{u>0:D^\alpha(u)>t\}
\end{equation*}
is called the inverse $\alpha$-stable subordinator. The Laplace transform of its density $l_\alpha(x,t)=\mathrm{Pr}\{{L}^\alpha(t)\in\mathrm{d}x\}$, $x\ge0$ with respect to time variable is given by (see \cite{Meerschaert2011})
\begin{equation}\label{lapminv}
	\int_{0}^{\infty}e^{-wt}l_\alpha(x,t)\ \mathrm{d}t=w^{\alpha-1}e^{-w^{\alpha}x},\ w>0.
\end{equation} 
It is the solution of the following fractional differential equation (see \cite{Beghin2020}):
\begin{equation}\label{invsequ}
	\partial_t^\alpha l_\alpha(x,t)=-\partial_xl_\alpha(x,t),
\end{equation}
with $l_\alpha(0,t)=t^{-\alpha}/\Gamma(1-\alpha)$, $t\ge0$.

For $i=1,2,\dots,d$, let $\{S_i^{\alpha_i}(t_i) \}_{t_i\ge0}$, $\alpha_i\in(0,1)$ be independent stable subordinators and  $\{L_i^{\alpha_i}(t_i)\}_{t_i\ge0}$ be  independent inverse stable subordinators. Then,
a multivariate $d$-parameter process $\{\boldsymbol{\mathscr{D}}_{\boldsymbol{\alpha}}(\textbf{t})\}_{\textbf{t}\in\mathbb{R}^d_+}$, $\boldsymbol{\alpha}=(\alpha_1,\alpha_2,\dots,\alpha_d)$ such that
\begin{equation}\label{mssdef}
	\boldsymbol{\mathscr{D}}_{\boldsymbol{\alpha}}(\textbf{t})=(D_1^{\alpha_1}(t_1),D_2^{\alpha_2}(t_2),\dots,D_d^{\alpha_d}(t_d)),\ \textbf{t}=(t_1,t_2,\dots,t_d)\in\mathbb{R}^d_+,
\end{equation}
is called multiparameter stable subodinator. And a multivariate $d$-parameter process $\{\boldsymbol{\mathscr{L}}_{\boldsymbol{\alpha}}(\textbf{t})\}_{\textbf{t}\in\mathbb{R}^d_+}$ such that
\begin{equation}\label{missdef}
	\boldsymbol{\mathscr{L}}_{\boldsymbol{\alpha}}(\textbf{t})=(L_1^{\alpha_1}(t_1),L_2^{\alpha_2}(t_2),\dots,L_d^{\alpha_d}(t_d)),\ \textbf{t}=(t_1,t_2,\dots,t_d)\in\mathbb{R}^d_+,
\end{equation}
is called multiparameter inverse stable subordinator.
\section{Multiparameter generalized counting process}\label{sec3}
Here, we introduce and study a multiparameter generalized counting process with indexing set $\mathbb{R}^d_+$, $d\ge1$. We call it the multiparameter GCP. For fix positive vectors  $\boldsymbol{\Lambda}_j=(\lambda_{j1},\lambda_{j2},\dots,\lambda_{jd})\succ \textbf{0}$, $j=1,2,\dots,k$ and $\mathbf{t}=(t_1,t_2,\dots,t_d)\in\mathbb{R}^{d}_{+}$, let us consider the following sequence $\{p(n,\mathbf{t})\}_{n\ge0}$ of real numbers: 
\begin{equation}\label{MGCPMF}
	p(n,\mathbf{t})=
	\sum_{\Omega(k,n)}\prod_{j=1}^{k}\frac{(\mathbf{\Lambda}_j \cdot\mathbf{t})^{x_{j}}}{x_j!}e^{-\mathbf{\Lambda}_j\cdot\mathbf{t}},\ n\ge0,
\end{equation}
where $\Omega(k,n)=\{(x_1,x_2,\dots,x_k):\sum_{j=1}^{k}jx_j=n,\, x_j\in\mathbb{N}_0\}$ and $\mathbf{\Lambda}_j\cdot\mathbf{t}=\lambda_{j1}t_1+\lambda_{j2}t_2+\dots+\lambda_{jd}t_d$.
Note that $\{p(n,\mathbf{t})\}_{n\ge0}$ satisfies the regularity condition, that is, $\sum_{n=0}^{\infty}p(n,\mathbf{t})=1$.
So, for each $\textbf{t}\in\mathbb{R}^d_+$, it is the pmf of GCP with mean $\sum_{j=1}^{k}j\boldsymbol{\Lambda}_j\cdot\textbf{t}$.
\begin{definition}(\textbf{Multiparameter counting process})
	An integer valued random process $\{\mathcal{N}(\textbf{t})\}_{\textbf{t}\in\mathbb{R}^d_+}$ is a multiparameter counting process if
	\begin{enumerate}
		\item $\mathcal{N}(\textbf{0})=0$,
		\item $\mathcal{N}(\textbf{t})\ge 0$ for all $\textbf{t}\in\mathbb{R}^d_+$,
		\item $\mathcal{N}(\textbf{s})\le \mathcal{N}(\textbf{t})$ for each $\mathbf{s}\preceq \mathbf{t}$.
	\end{enumerate} 
\end{definition}
\begin{definition}(\textbf{Multiparameter GCP})
	Let $\{\mathscr{M}(\textbf{t})\}_{\textbf{t}\in\mathbb{R}^d_+}$ be a multiparameter counting process. Then, the process $\{\mathscr{M}(\textbf{t})\}_{\textbf{t}\in\mathbb{R}^d_+}$ is a multiparameter GCP with transition rates $\mathbf{\Lambda}_j=(\lambda_{j1}, \lambda_{j2}, \dots, \lambda_{jd})\succ \textbf{0}$, $j=1,2,\dots,k$ if the following holds:\\
	\noindent(i) $\mathscr{M}(\mathbf{0})=0$,\\
	\noindent(ii) it has independent increments, that is, for each $\mathbf{t}^{(i)}\in\mathbb{R}^d_+$ such that $\mathbf{0}=\mathbf{t}^{(0)}\prec\mathbf{t}^{(1)}\prec\dots\prec\mathbf{t}^{(n)}$, the increments $\mathscr{M}(\mathbf{t}^{(1)})-\mathscr{M}(\mathbf{t}^{(0)})$, $\mathscr{M}(\mathbf{t}^{(2)})-\mathscr{M}(\mathbf{t}^{(1)})$, $\dots$, $\mathscr{M}(\mathbf{t}^{(n)})-\mathscr{M}(\mathbf{t}^{(n-1)})$ are mutually independent,\\
	\noindent(iii) it has stationary increments, that is, for $\mathbf{s}\in\mathbb{R}^d_+$ and $\mathbf{t}\in\mathbb{R}^d_+$ such that $\mathbf{s}\preceq \mathbf{t}$, the random variable $\mathscr{M}(\mathbf{t})-\mathscr{M}(\mathbf{s})$ and $\mathscr{M}(\mathbf{t}-\mathbf{s})$ are equal in distribution, that is, $\mathscr{M}(\mathbf{t})-\mathscr{M}(\mathbf{s})\overset{d}{=}\mathscr{M}(\mathbf{t}-\mathbf{s})$,\\
	\noindent(iv) its one dimensional distribution is given by \eqref{MGCPMF}, that is, $\mathrm{Pr}(\mathscr{M}(\textbf{t})=n)=p(n,\mathbf{t})$, $n\ge 0$. 
\end{definition}
The pgf of multiparameter GCP is given by
\begin{equation}\label{MPGPGF}
	\mathbb{E}(u^{\mathscr{M}(\mathbf{t})})=\exp\Big(-\sum_{j=1}^{k}\mathbf{\Lambda}_j
	\cdot \mathbf{t}(1-u^j)\Big), \ |u|\le 1, \ \textbf{t}\in\mathbb{R}^d_+.
\end{equation}
It has the following moment generating function (mgf):
\begin{equation}\label{MPgcpmgf}
	\mathbb{E}(e^{u\mathscr{M}(\textbf{t})})
	=\exp\Big(-\sum_{j=1}^{k}\mathbf{\Lambda}_j\cdot\mathbf{t}(1-e^{uj})\Big), \ u\in\mathbb{R},\ \textbf{t}\in\mathbb{R}^d_+. 
\end{equation}
Its mean and variance are $\mathbb{E}(\mathscr{M}(\mathbf{t}))=\sum_{j=1}^{k}j\mathbf{\Lambda}_j
\cdot \mathbf{t}$ and $\operatorname{Var}(\mathscr{M}(\mathbf{t}))=\sum_{j=1}^{k}j^2\mathbf{\Lambda}_j
\cdot \mathbf{t}$, respectively.

The following result establishes the existence of a multiparameter GCP.
\begin{theorem}\label{MPexis}
	Let $\{\mathscr{N}_1(\mathbf{t})\}_{\mathbf{t}\in\mathbb{R}_+^d}$, $\{\mathscr{N}_2(\mathbf{t})\}_{\mathbf{t}\in\mathbb{R}_+^d}$, $\dots$, $\{\mathscr{N}_k(\mathbf{t})\}_{\mathbf{t}\in\mathbb{R}_+^d}$ be independent multiparameter counting processes. Then, the weighted process
	\begin{equation*}
		\mathscr{M}(\mathbf{t})=\mathscr{N}_1(\mathbf{t})+2\mathscr{N}_2(\mathbf{t})+\dots+l\mathscr{N}_l(\mathbf{t})
	\end{equation*} 
	is a multiparameter GCP with transition parameters $\mathbf{\Lambda}_j=(\lambda_{j1},\lambda_{j2},\dots,\lambda_{jd})\succ\textbf{0}$, $j=1,2,\dots,l$ for all $1\le l\le k$ iff  each $\{\mathscr{N}_j(\mathbf{t})\}_{\mathbf{t}\in\mathbb{R}_+^d}$, $j=1,2,\dots,k$ is a multiparameter Poisson process with transition parameter $\mathbf{\Lambda}_j$.    
\end{theorem}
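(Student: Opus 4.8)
The plan is to prove the two implications separately: the sufficiency direction ($\Leftarrow$) by direct verification of the defining properties, and the necessity direction ($\Rightarrow$) by induction on $l$. Throughout, write $\mathscr{M}_l(\mathbf{t})=\sum_{j=1}^{l}j\mathscr{N}_j(\mathbf{t})$ and use repeatedly that the probability generating function (pgf) of a sum of independent processes factors as the product of the individual pgfs, together with the facts recorded in the excerpt that the multiparameter Poisson process with rate $\mathbf{\Lambda}_j$ has one dimensional pgf $\exp(-\mathbf{\Lambda}_j\cdot\mathbf{t}(1-u))$ and that the multiparameter GCP has pgf \eqref{MPGPGF}.

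For the sufficiency direction, assume each $\mathscr{N}_j$ is a multiparameter Poisson process with rate $\mathbf{\Lambda}_j$, and fix $1\le l\le k$. Property (i) of the multiparameter GCP follows from $\mathscr{N}_j(\mathbf{0})=0$; properties (ii) and (iii) hold because a finite linear combination of independent processes each having independent, respectively stationary, increments again has independent, respectively stationary, increments. For property (iv) I would factor $\mathbb{E}(u^{\mathscr{M}_l(\mathbf{t})})=\prod_{j=1}^{l}\mathbb{E}(u^{j\mathscr{N}_j(\mathbf{t})})$ and insert the Poisson pgf of each factor to recover \eqref{MPGPGF} with the sum truncated at $l$, hence the pmf \eqref{MGCPMF}; since a pgf determines the law of an $\mathbb{N}_0$-valued variable, this gives (iv).

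For the necessity direction I would induct on $l$. In the base case $l=1$, $\mathscr{M}_1=\mathscr{N}_1$ is assumed to be a multiparameter GCP with the single rate $\mathbf{\Lambda}_1$, whose pgf $\exp(-\mathbf{\Lambda}_1\cdot\mathbf{t}(1-u))$ coincides with the multiparameter Poisson pgf, so $\mathscr{N}_1$ is one. For the inductive step, assume $\mathscr{N}_1,\dots,\mathscr{N}_{l-1}$ are already known to be multiparameter Poisson processes. The central device is pgf division: independence of the $\mathscr{N}_j$ gives $\mathbb{E}(u^{\mathscr{M}_l(\mathbf{t})})=\prod_{j=1}^{l}\mathbb{E}(u^{j\mathscr{N}_j(\mathbf{t})})$, and dividing the known GCP pgf \eqref{MPGPGF} by the now-known Poisson factors for $j<l$ leaves $\mathbb{E}(u^{l\mathscr{N}_l(\mathbf{t})})=\exp(-\mathbf{\Lambda}_l\cdot\mathbf{t}(1-u^l))$. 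The substitution $v=u^l$, a bijection of $[0,1]$, identifies this as the Poisson pgf of $\mathscr{N}_l$ and fixes its one dimensional law.

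To upgrade this to ``$\mathscr{N}_l$ is a multiparameter Poisson process'' I still need its independent and stationary increments, and I would obtain both by the same cancellation at the level of increments. For stationarity, I would equate the increment pgf $\mathbb{E}(u^{\mathscr{M}_l(\mathbf{t})-\mathscr{M}_l(\mathbf{s})})$, which by the GCP property equals $\mathbb{E}(u^{\mathscr{M}_l(\mathbf{t}-\mathbf{s})})$, with $\prod_{j=1}^{l}\mathbb{E}(u^{j[\mathscr{N}_j(\mathbf{t})-\mathscr{N}_j(\mathbf{s})]})$ and cancel the Poisson factors for $j<l$, forcing $\mathscr{N}_l(\mathbf{t})-\mathscr{N}_l(\mathbf{s})\overset{d}{=}\mathscr{N}_l(\mathbf{t}-\mathbf{s})$. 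For independence, I would take a chain $\mathbf{0}=\mathbf{t}^{(0)}\prec\dots\prec\mathbf{t}^{(n)}$, write the joint pgf of the increments $\Delta_i\mathscr{M}_l=\mathscr{M}_l(\mathbf{t}^{(i)})-\mathscr{M}_l(\mathbf{t}^{(i-1)})$, and observe that it factors both over the chain (independent increments of $\mathscr{M}_l$) and over $j$ (independence of the $\mathscr{N}_j$); canceling the factors for $j<l$, which split over the chain by the inductive hypothesis, yields $\mathbb{E}(\prod_i u_i^{l\Delta_i\mathscr{N}_l})=\prod_i\mathbb{E}(u_i^{l\Delta_i\mathscr{N}_l})$, and the substitutions $v_i=u_i^l$ give the full factorization, i.e.\ independent increments. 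The step I expect to be the main obstacle is precisely this last one: keeping the two factorizations (over the chain and over the index $j$) simultaneously under control, justifying the cancellation of the $j<l$ factors, and correctly passing from a factorization in the variables $u_i^l$ to genuine mutual independence of the increments $\Delta_i\mathscr{N}_l$ via $v_i=u_i^l$; the remaining manipulations are routine algebra of generating functions.
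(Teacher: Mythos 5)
Your proposal is correct and takes essentially the same route as the paper: sufficiency by direct verification of the increment properties plus factorization of the generating function, and necessity by induction on the index, cancelling the already-identified factors for $j<l$ first for the one dimensional law, then for the increment over $\mathbf{s}\preceq\mathbf{t}$ (stationarity), and finally for the joint generating function of increments along a chain (independence). The only cosmetic difference is that the paper works with moment generating functions $\mathbb{E}(e^{u\mathscr{M}(\mathbf{t})})$ rather than pgfs, which renders your substitution $v_i=u_i^l$ unnecessary but changes nothing of substance.
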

\begin{proof}
	First we prove the if part. As $\mathscr{N}_j(\mathbf{0})=0$ for all $j=1,2,\dots,k$, we have $\mathscr{M}(\mathbf{0})=\sum_{j=1}^{l}j\mathscr{N}_j(\mathbf{0})=0$ for all $1\le l\le k$. The stationary increments property of $\{\mathscr{M}(\mathbf{t})\}_{\mathbf{t}\in\mathbb{R}_+^d}$ follows on using the stationary increments property of multiparameter Poisson processes. Also, the independence of $\{\mathscr{N}_j(\mathbf{t})\}_{\mathbf{t}\in\mathbb{R}_+^d}$'s and their independent increments property can be used to show that $\{\mathscr{M}(\mathbf{t})\}_{\mathbf{t}\in\mathbb{R}_+^d}$ has independent increments. Moreover, the pgf of $\{\mathscr{M}(\mathbf{t})\}_{\mathbf{t}\in\mathbb{R}_+^d}$ is given by
	\begin{align}
		\mathbb{E}(u^{\sum_{j=1}^{l}j\mathscr{N}_j(\mathbf{t})})&=\prod_{j=1}^{l}\mathbb{E}(u^{j\mathscr{N}_j(\mathbf{t})}),\ \text{(using independence of $\mathscr{N}_j(\mathbf{t})$'s)}\nonumber\\
		&=\exp\Big(\sum_{j=1}^{l}\mathbf{\Lambda}_j \cdot \mathbf{t} (u^j-1)\Big),\ |u|\le 1,\ \textbf{t}\in\mathbb{R}^d_+,\label{pgfrex}
	\end{align}
	where the last step follows on using Eq. (3.3) of \cite{Vishwakarma2025}. So, \eqref{pgfrex} is the pgf of a multiparameter GCP. Thus, the weighted process $\{\sum_{j=1}^{l}j\mathscr{N}_j(\mathbf{t})\}_{\mathbf{t}\in\mathbb{R}_+^d}$ is a multiparameter GCP for all $1\le l\le k$ with transition parameters $\mathbf{\Lambda}_{j}$'s.
	
	To establish the only if part, we use the method of induction as follows:
	For $k=1$, the result holds true, that is, $\{\mathscr{N}_1(\mathbf{t})\}_{\mathbf{t}\in\mathbb{R}_+^d}$ is a multiparameter Poisson process with transition parameter $\mathbf{\Lambda}_1\succ \textbf{0}$. Let us assume that the result holds true for $k=m-1\ge 2$. Now, by using the independence of $\mathscr{N}_j(\mathbf{t})$'s for $k=m>2$, we get
	\begin{equation}\label{m}
		\mathbb{E}(e^{u\sum_{j=1}^{m}j\mathscr{N}_j(\mathbf{t})})  =\mathbb{E}(e^{umN_{m}(\mathbf{t})})\prod_{j=1}^{m-1}e^{\mathbf{\Lambda}_j\cdot\mathbf{t}(e^{uj}-1)},\ u\in\mathbb{R}
	\end{equation}
	which follows on using the induction hypothesis, that is, $\sum_{j=1}^{m-1}j\mathscr{N}_j(\mathbf{t})$ is a multiparameter GCP and $\{\mathscr{N}_1(\mathbf{t})\}_{\mathbf{t}\in\mathbb{R}_+^d}$, $\{\mathscr{N}_2(\mathbf{t})\}_{\mathbf{t}\in\mathbb{R}_+^d}$, $\dots$, $\{\mathscr{N}_{m-1}(\mathbf{t})\}_{\mathbf{t}\in\mathbb{R}_+^d}$ are multiparameter Poisson processes.
	Also, by using \eqref{MPgcpmgf}, we have
	\begin{equation}\label{mm}
		\mathbb{E}(e^{u\sum_{j=1}^{m}j\mathscr{N}_j(\mathbf{t})})
		=\exp\bigg(\sum_{j=1}^{m}\mathbf{\Lambda}_j\cdot\mathbf{t}(e^{uj}-1)\bigg).
	\end{equation}
	From \eqref{m} and \eqref{mm}, we get
	\begin{equation}\label{a}
		\mathbb{E}(e^{um\mathscr{N}_{m}(\mathbf{t})})=e^{\mathbf{\Lambda}_{m}\cdot\mathbf{t}(e^{um}-1)}.
	\end{equation} 
	Thus, $\{\mathscr{N}_{m}(\mathbf{t})\}_{\mathbf{t}\in\mathbb{R}_+^d}$ has a multiparameter Poisson distribution with parameter $\mathbf{\Lambda}_{m}\cdot\textbf{t}$.
	
	Let $\mathbf{s}\in\mathbb{R}^d_+$ and $\mathbf{t}\in\mathbb{R}^d_+$ be such that $\mathbf{0}\prec\mathbf{s}\preceq\mathbf{t}$. Then, by using the independence of $N_j(\mathbf{t})$'s for $k=m$, we get
	\begin{align}
		\mathbb{E}\bigg(\exp\bigg(u\sum_{j=1}^{m}j(\mathscr{N}_j(\mathbf{t})&-\mathscr{N}_j(\mathbf{s}))\bigg)\bigg)\nonumber
		\\&=\mathbb{E}(e^{um(\mathscr{N}_m(\mathbf{t})-\mathscr{N}_m(\mathbf{s}))})\prod_{j=1}^{m-1}\mathbb{E}(e^{uj(\mathscr{N}_j(\mathbf{t})-\mathscr{N}_j(\mathbf{s}))})\nonumber\\
		&=\mathbb{E}(e^{um(\mathscr{N}_m(\mathbf{t})-\mathscr{N}_m(\mathbf{s}))})\prod_{j=1}^{m-1}e^{\mathbf{\Lambda}_j\cdot(\mathbf{t}-\mathbf{s})(e^{uj}-1)},\label{s}
	\end{align}
	where the last step is obtained on using the stationary increments property of  $\{\mathscr{N}_j(\mathbf{t})\}_{\mathbf{t}\in\mathbb{R}^d_+}$, $1\le j\le m-1$. Also, we have 
	\begin{equation}\label{ss}
		\mathbb{E}\bigg(\exp\bigg(u\sum_{j=1}^{m}j(\mathscr{N}_j(\mathbf{t})-\mathscr{N}_j(\mathbf{s}))\bigg)\bigg)=\prod_{j=1}^{m}e^{\mathbf{\Lambda}_j\cdot(\mathbf{t}-\mathbf{s})(e^{uj}-1)}
	\end{equation}
	which follows on using the stationary increments of multiparameter GCP.
	On equating the right hand sides of \eqref{s} and \eqref{ss}, we get
	\begin{equation*}
		\mathbb{E}(e^{um(\mathscr{N}_m(\mathbf{t})-\mathscr{N}_m(\mathbf{s}))})=e^{\mathbf{\Lambda_m}\cdot(\mathbf{t}-\mathbf{s})(e^{um}-1)}.
	\end{equation*} 
	Thus, $\{\mathscr{N}_m(\mathbf{t})\}_{\mathbf{t}\in\mathbb{R}^d_+}$ has the stationary increments property, that is, $\mathscr{N}_m(\mathbf{t})-\mathscr{N}_m(\mathbf{s})\overset{d}{=}\mathscr{N}_m(\mathbf{t}-\mathbf{s})$, which follows from \eqref{a}.
	
	Let $\mathbf{t}^{(i)}\in\mathbb{R}^d_+$, $i=1,2,\dots,r$ and $\textbf{0}\preceq \mathbf{t}^{(0)}\prec\mathbf{t}^{(1)}\prec\dots\prec\mathbf{t}^{(r)}$. By using the independence of $N_j(\mathbf{t})$'s for $k=m$, we get
	\begin{align}
		\mathbb{E}&\bigg(\exp\bigg(\sum_{i=1}^{r}u_i\sum_{j=1}^{m}j(\mathscr{N}_j(\mathbf{t}^{(i)})-\mathscr{N}_j(\mathbf{t}^{(i-1)})\bigg)\bigg)\nonumber\\
		&=\prod_{i=1}^{r}\mathbb{E}\Big(e^{u_im(\mathscr{N}_m(\mathbf{t}^{(i)})-\mathscr{N}_m(\mathbf{t}^{(i-1)})}\Big)\mathbb{E}\Big(\exp\Big(u_i\sum_{j=1}^{m-1}j(\mathscr{N}_j(\mathbf{t}^{(i)})-\mathscr{N}_j(\mathbf{t}^{(i-1)}))\Big)\Big)
		\nonumber\\
		&=\prod_{i=1}^{r}\mathbb{E}\bigg(e^{u_im(\mathscr{N}_m(\mathbf{t}^{(i)})-\mathscr{N}_m(\mathbf{t}^{(i-1)}))}\bigg)\prod_{j=1}^{m-1}e^{\mathbf{\Lambda}_j\cdot(\mathbf{t}^{(i)}-\mathbf{t}^{(i-1)})(e^{u_ij}-1)},\label{i}
	\end{align}
	where we obtain the last step using stationary increments of $\mathscr{N}_j(\mathbf{t})$'s. 
	Now, by using the independent increments of multiparameter GCP $\{\sum_{j=1}^{m}j\mathscr{N}_j(\mathbf{t})\}_{\mathbf{t}\in\mathbb{R}^d_+}$, we obtain 
	\begin{equation}\label{ii}
		\mathbb{E}\bigg(\exp\bigg(\sum_{i=1}^{r}u_i\sum_{j=1}^{m}j(\mathscr{N}_j(\mathbf{t}^{(i)})-\mathscr{N}_j(\mathbf{t}^{(i-1)}))\bigg)\bigg)=\prod_{i=1}^{r}\prod_{j=1}^{m}e^{\mathbf{\Lambda}_j\cdot(\mathbf{t}^{(i)}-\mathbf{t}^{(i-1)})(e^{u_ij}-1)}.
	\end{equation}
	From \eqref{i} and \eqref{ii}, we get
	\begin{equation*}
		\mathbb{E}\bigg(e^{\sum_{i=1}^{r}u_im(\mathscr{N}_m(\mathbf{t}^{(i)})-\mathscr{N}_m(\mathbf{t}^{(i-1)}))}\bigg)=\prod_{i=1}^{r}\mathbb{E}(e^{u_im(\mathscr{N}_m(\mathbf{t}^{(i)})-\mathscr{N}_m(\mathbf{t}^{(i-1)}))}).
	\end{equation*}
	So, $\{\mathscr{N}_m(\mathbf{t})\}_{\mathbf{t}\in\mathbb{R}^d_+}$ has independent increments. Thus, $\{\mathscr{N}_m(\mathbf{t})\}_{\mathbf{t}\in\mathbb{R}^d_+}$ is a multiparameter Poisson process with transition parameter $\mathbf{\Lambda}_m\succ\textbf{0}$. This completes the proof.  
\end{proof}
\begin{remark}\label{summpp}
	Let $\{\mathscr{M}(\textbf{t})\}_{\textbf{t}\in\mathbb{R}^d_+}$ be a multiparameter GCP with transition parameters  $\boldsymbol{\Lambda}_j=(\lambda_{j1},\lambda_{j2},\dots,\lambda_{jd})\succ\textbf{0}$, $j=1,2,\dots,k$. Then, there exist independent multiparameter Poisson processes $\{\mathscr{N}_1(\textbf{t})\}_{\textbf{t}\in\mathbb{R}^d_+}$, $\{\mathscr{N}_2(\textbf{t})\}_{\textbf{t}\in\mathbb{R}^d_+}$, $\dots$, $\{\mathscr{N}_k(\textbf{t})\}_{\textbf{t}\in\mathbb{R}^d_+}$  with transition rates $\boldsymbol{\Lambda}_1$, $\boldsymbol{\Lambda}_2$, $\dots$, $\boldsymbol{\Lambda}_k$, respectively, such that
	$\mathscr{M}(\textbf{t})\overset{d}{=}\sum_{j=1}^{k}j\mathscr{N}_j(\textbf{t})$ for all $\textbf{t}=(t_1,t_2,\dots,t_d)\in\mathbb{R}^d_+
	$. So, the multiparameter GCP is equal in distribution to an additive L\'evy process in the sense of  \cite{Khoshnevisan2002a}.   
	We refer the reader to  \cite{Khoshnevisana2003} for more details on additive L\'evy processes.  
\end{remark}

Alternatively, the existence of a multiparameter GCP can be established using independent GCPs as follows:
\begin{theorem}
	Let $\{M_1(t_1)\}_{t_1\ge0}$, $\{M_2(t_2)\}_{t_2\ge0}$, $\dots$, $\{M_d(t_d)\}_{t_d\ge0}$ be independent counting processes. Then, the multiparameter counting process $\{\mathscr{M}(\textbf{t})\}_{\textbf{t}\in\mathbb{R}^d_+}$ defined as
	\begin{equation}\label{MPgexc}
		\mathscr{M}(\textbf{t})\coloneqq M_1(t_1)+M_2(t_2)+\dots+M_d(t_d),\ \textbf{t}\in\mathbb{R}^d_+,
	\end{equation}
	is a multiparameter GCP with transition rates $\boldsymbol{\Lambda}_j=(\lambda_{j1},\lambda_{j2},\dots,\lambda_{jd})\succ \textbf{0}$, $j=1,2,\dots,k$ if and only if for each $i=1,2,\dots,d$, $\{M_i(t_i)\}_{t_i\ge0}$ is a GCP which performs independently $k$ kinds of jumps with positive rates $\lambda_{ji}>0$, $j=1,2,\dots,k$.
\end{theorem}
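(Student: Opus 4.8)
The plan is to prove both implications at once by exploiting the independence of the coordinate processes $\{M_i(t_i)\}$ together with the fact that a multiparameter GCP is pinned down by the items of its definition, the fourth of which is equivalent, by \eqref{MPGPGF}, to the pgf identity $\mathbb{E}(u^{\mathscr{M}(\mathbf{t})})=\exp(-\sum_{j=1}^{k}\boldsymbol{\Lambda}_j\cdot\mathbf{t}(1-u^j))$. So in each direction it suffices to translate between this pgf and the pgf of a one parameter GCP.

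For the \emph{if} part, suppose each $\{M_i(t_i)\}_{t_i\ge0}$ is a GCP performing $k$ kinds of jumps with rates $\lambda_{ji}$, $j=1,\dots,k$. First I would verify that $\mathscr{M}(\mathbf{t})=\sum_{i=1}^{d}M_i(t_i)$ is a multiparameter counting process: $\mathscr{M}(\mathbf{0})=\sum_i M_i(0)=0$, while non-negativity and monotonicity in $\preceq$ follow because each $M_i$ is non-decreasing. For the increment structure I would write $\mathscr{M}(\mathbf{t})-\mathscr{M}(\mathbf{s})=\sum_i(M_i(t_i)-M_i(s_i))$ and note that the family $\{M_i(t_i^{(l)})-M_i(t_i^{(l-1)})\}_{i,l}$ is mutually independent (using the independent increments of each $M_i$ along its own axis and the independence across $i$), so the $\mathscr{M}$-increments, being sums over $i$ of disjoint subfamilies, are mutually independent; stationarity follows likewise from $M_i(t_i)-M_i(s_i)\overset{d}{=}M_i(t_i-s_i)$ together with independence across $i$. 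The one dimensional law is then identified through the pgf:
\[
\mathbb{E}(u^{\mathscr{M}(\mathbf{t})})=\prod_{i=1}^{d}\mathbb{E}(u^{M_i(t_i)})=\prod_{i=1}^{d}\exp\Big(-\sum_{j=1}^{k}\lambda_{ji}t_i(1-u^j)\Big)=\exp\Big(-\sum_{j=1}^{k}\boldsymbol{\Lambda}_j\cdot\mathbf{t}(1-u^j)\Big),
\]
which is exactly \eqref{MPGPGF}; hence $\mathscr{M}$ is a multiparameter GCP with transition rates $\boldsymbol{\Lambda}_j$.

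For the \emph{only if} part, suppose $\mathscr{M}(\mathbf{t})=\sum_i M_i(t_i)$ is a multiparameter GCP. The main device is restriction to a single coordinate axis. Evaluating at $\mathbf{t}=t_i\mathbf{e}_i$ (the $i$-th coordinate equal to $t_i$, the rest $0$) and using $M_{i'}(0)=0$ for $i'\neq i$ gives $\mathscr{M}(t_i\mathbf{e}_i)=M_i(t_i)$, so from \eqref{MPGPGF},
\[
\mathbb{E}(u^{M_i(t_i)})=\exp\Big(-\sum_{j=1}^{k}\lambda_{ji}t_i(1-u^j)\Big),
\]
which is the pgf of a GCP with jump rates $\lambda_{1i},\dots,\lambda_{ki}$; thus each $M_i$ has the correct one dimensional law. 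Stationarity of the increments of $M_i$ is obtained the same way by taking $\mathbf{s}=s_i\mathbf{e}_i\preceq t_i\mathbf{e}_i=\mathbf{t}$, for which $\mathscr{M}(\mathbf{t})-\mathscr{M}(\mathbf{s})=M_i(t_i)-M_i(s_i)$ and $\mathscr{M}(\mathbf{t}-\mathbf{s})=M_i(t_i-s_i)$, so stationarity of $\mathscr{M}$ transfers directly. Once independent increments are in hand, $M_i$ is a counting process with $M_i(0)=0$, stationary and independent increments, and the GCP one dimensional law, hence a one parameter GCP with rates $\lambda_{ji}$.

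The step I expect to be the main obstacle is establishing the \emph{independent increments} of a single $M_i$ in the only-if part, because independent increments of $\mathscr{M}$ are phrased along a chain $\mathbf{0}=\mathbf{t}^{(0)}\prec\mathbf{t}^{(1)}\prec\dots\prec\mathbf{t}^{(n)}$, and a chain lying entirely on one axis freezes all other coordinates at $0$. If $\prec$ is read as $\preceq$ together with $\mathbf{t}^{(l-1)}\neq\mathbf{t}^{(l)}$, I would simply take $\mathbf{t}^{(l)}=s_l\mathbf{e}_i$ with $0=s_0<s_1<\dots<s_n$ and read off the independence of the $M_i(s_l)-M_i(s_{l-1})$ directly. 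If instead $\prec$ forces strict inequality in every coordinate, I would perturb to $\mathbf{t}^{(l)}=s_l\mathbf{e}_i+\tau_l(\mathbf{1}-\mathbf{e}_i)$ with $0=\tau_0<\tau_1<\dots<\tau_n$, so the $\mathscr{M}$-increment splits as $A_l+B_l$ with $A_l=M_i(s_l)-M_i(s_{l-1})$ and $B_l=\sum_{i'\neq i}(M_{i'}(\tau_l)-M_{i'}(\tau_{l-1}))$; here $\{A_l\}$ is independent of $\{B_l\}$, and letting $\tau_l\downarrow0$ (using that the already-identified GCP marginals force $M_{i'}(\tau)\to0$ in distribution, so $B_l\to0$) makes the joint transform factorise and yields the mutual independence of the $A_l$. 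I would present the first reading as the clean argument and keep the perturbation as a safeguard.
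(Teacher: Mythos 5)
Your proof is correct and takes essentially the same route as the paper, which omits the argument by pointing to Theorem 3.1 of Vishwakarma and Kataria (2025): the \emph{if} part via the factorized pgf computation together with the transfer of stationary and independent increments from the coordinate processes, and the \emph{only if} part via restriction to the coordinate axes $\mathbf{t}=t_i\mathbf{e}_i$. Your explicit handling of the ambiguity in $\prec$ (the axis chain under the reading $\preceq$ with $\neq$, and the perturbation $\mathbf{t}^{(l)}=s_l\mathbf{e}_i+\tau_l(\mathbf{1}-\mathbf{e}_i)$ with $\tau_l\downarrow 0$ under the strict reading) is a sound extra safeguard beyond the cited argument, but the main line coincides.
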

\begin{proof}
	The proof follows similar lines to that of Theorem 3.1 of \cite{Vishwakarma2025}. Hence, it is omitted.
\end{proof}
\begin{remark}\label{MGasGcp}
	In view of \cite{Khoshnevisan2002a}, the multiparameter GCP as defined in \eqref{MPgexc} is an additive L\'evy process. Also, for any multiparameter GCP $\{\mathscr{M}(\textbf{t})\}_{\textbf{t}\in\mathbb{R}^d_+}$ with transition rates $\boldsymbol{\Lambda}_j$, $j=1,2,\dots,k$, there exist independent GCPs $\{M_i(t_i)\}_{t_i\ge0}$, $i=1,2,\dots,d$ with transition rates $\lambda_{ji}$, $j=1,2,\dots,k$ such that $\mathscr{M}(\textbf{t})\overset{d}{=}\sum_{i=1}^{d}M_i(t_i)$ for all $\textbf{t}=(t_1,t_2,\dots,t_d)\in\mathbb{R}^d_+$.
\end{remark}
Alternatively,  the pgf given in \eqref{MPGPGF} can be obtained by using Remark \ref{MGasGcp} as follows:
\begin{align*}
	\mathbb{E}(u^{\mathscr{M}(\mathbf{t})})&=\prod_{i=1}^{d}\mathbb{E}(u^{M_i(t_i)}),\ \text{(as $M_i(t_i)$'s are independent)}\\
	&=\prod_{i=1}^{d}\exp\Big(-\sum_{j=1}^{k}\lambda_{ji}t_i(1-u^j)\Big)\\
	&=\exp\Big(-\sum_{j=1}^{k}\mathbf{\Lambda}_j
	\cdot \mathbf{t}(1-u^j)\Big), \ |u|\le 1, \ \textbf{t}\in\mathbb{R}^d_+.
\end{align*}

Also, by using Remark \ref{MGasGcp}, the distribution function $p(n,\textbf{t})=\mathrm{Pr}\{\mathscr{M}(\textbf{t})=n\}$, $n\ge0$ can be obtained in the following form: 
\begin{equation}\label{MGCpmf}
	p(n,\textbf{t})=\sum_{\Theta(n,d)}\Big(\prod_{i=1}^{d}\sum_{\Omega(k,n_i)}\prod_{j=1}^{k}\frac{(\lambda_{ji}t_i)^{n_{ji}}}{n_{ji}!}e^{-\lambda_{ji}t_i}\Big),
\end{equation}
where we have used the distribution of GCP (see \cite[Eq. (2.9)]{Di Crescenzo2016}). 
Here, $\Theta(n,d)=\{(n_1,n_2,\dots,n_d):\sum_{i=1}^{d}n_i=n,\  n_i\in\mathbb{N}_0\}$ and $\Omega(k,n_i)=\{(n_{1i},n_{2i},\dots, n_{ki}):\sum_{j=1}^{k}n_{ji}=n_i,\ n_{ji}\in\mathbb{N}_0\}$. 
It is important to note that \eqref{MGCPMF} and \eqref{MGCpmf}
are equivalent representation of the distribution of $\{\mathscr{M}(\textbf{t})\}_{\textbf{t}\in\mathbb{R}^d_+}$.

\subsection{Integrals of multiparameter GCP}
The Riemann-Liouville (RL) fractional integral of homogeneous Poisson process and its time-changed variant was studied by Orsingher and Polito \cite{Orsingher2013}. In  \cite{Kataria2024}, a similar study is done for the fractional Poisson random field over a rectangle. For more details on integrated random processes and their potential applications, we refer the reader to \cite{Khandakar2025}, \cite{Vishwakarma2024a}, \cite{Vishwakarma2024b}, and references therein. 

Here, we discuss about the fractional integrals of multiparameter GCP over a rectangle in $\mathbb{R}^d_+$.
For $\boldsymbol{\alpha}=(\alpha_1,\alpha_2,\dots,\alpha_d)\succ \textbf{0}$, we define the RL fractional integral of multiparameter GCP as follows:
{\small\begin{equation*}
		\mathscr{X}^{\boldsymbol{\alpha}}(\mathbf{t})=\frac{1}{\Gamma(\alpha_1)\dots\Gamma(\alpha_d)}\int_{0}^{t_1}\dots\int_{0}^{t_d}(t_1-s_1)^{\alpha_1-1}\dots(t_d-s_d)^{\alpha_d-1}\mathscr{M}(\mathbf{s})\,\mathrm{d}s_1\dots\mathrm{d}s_d,\ \mathbf{t}\in\mathbb{R}^d_+.
\end{equation*}}
\begin{remark}
	For $\boldsymbol{\alpha}=\textbf{1}$, $\mathscr{X}^{\boldsymbol{\alpha}}(\textbf{t})$ reduces to the Riemann integral of multiparameter GCP. We denote it by $\mathscr{X}(\textbf{t})$.
\end{remark}
Its mean is given by
{\small\begin{align*}
		&\mathbb{E}(\mathscr{X}^{\boldsymbol{\alpha}}(\mathbf{t}))\\
		&=\frac{1}{\Gamma(\alpha_1)\dots\Gamma(\alpha_d)}\sum_{j=1}^{k}\sum_{l=1}^{d}j\lambda_{jl}\int_{0}^{t_1}\dots\int_{0}^{t_d}(t_1-s_1)^{\alpha_1-1}\dots(t_d-s_d)^{\alpha_d-1}s_l\,\mathrm{d}s_1\dots\mathrm{d}s_d\nonumber\\
		&=\sum_{l=1}^{d}\sum_{j=1}^{k}j\lambda_{jl}\frac{1}{\Gamma(\alpha_1)\dots\Gamma(\alpha_d)}\Big(\prod_{\substack{i=1\\i\ne l}}^{d}\frac{t_i^{\alpha_i}}{\alpha_i}\Big)t_l^{\alpha_l+1}\frac{\Gamma(\alpha_l)}{\Gamma(\alpha_l+2)}\nonumber\\
		&=\sum_{l=1}^{d}\sum_{j=1}^{k}j\lambda_{jl}\Big(\prod_{\substack{i=1\\i\ne l}}^{d}\frac{t_i^{\alpha_i}}{\Gamma(\alpha_i+1)}\Big)\frac{t_l^{\alpha_l+1}}{\Gamma(\alpha_l+2)}.
	\end{align*}
}
From Remark \ref{summpp} and Remark 3.3 of \cite{Vishwakarma2025}, we have 
{\small\begin{align*}
		\mathscr{X}^{\boldsymbol{\alpha}}(\mathbf{t})&\overset{d}{=}\sum_{j=1}^{k}\frac{j}{\Gamma(\alpha_1)\dots\Gamma(\alpha_d)}\int_{0}^{t_1}(t_1-s_1)^{\alpha_1-1}\dots\int_{0}^{t_d}(t_d-s_d)^{\alpha_d-1}N_j(\textbf{s})\,\mathrm{d}s_1\dots\mathrm{d}s_d\\
		&\overset{d}{=}\sum_{j=1}^{k}\sum_{i=1}^{d}j\Big(\prod_{\substack{r=1\\r\ne i}}^{d}\frac{t_r^{\alpha_r}}{\Gamma(\alpha_r+1)}\Big)\frac{1}{\Gamma(\alpha_i)}\int_{0}^{t_i}(t_i-s_i)^{\alpha_i-1}N_{ji}(s_i)\mathrm{d}s_i,
\end{align*}}
where $\{N_{ji}(t_i)\}_{t_i\ge0}$'s are independent Poisson processes with transition rates $\lambda_{ji}>0$, respectively.

By using Eq. (3.19) of  \cite{Vishwakarma2025}, the variance of $\mathscr{X}^{\boldsymbol{\alpha}}(\mathbf{t})$ is obtained in the following form:
\begin{align*}
	\operatorname{Var}(\mathscr{X}^{\boldsymbol{\alpha}}(\mathbf{t}))&=\sum_{j=1}^{k}j^2\sum_{i=1}^{d}\frac{\lambda_{ji}t_i^{2\alpha_i+1}}{(2\alpha_i+1)\Gamma^2(\alpha_i+1)}\Big(\prod_{\substack{r=1\\r\ne i}}^{d}\frac{t_r^{\alpha_r}}{\Gamma(\alpha_r+1)}\Big)^2,\ \textbf{t}\in\mathbb{R}^d_+.
\end{align*}
\begin{remark}
	For $k=1$, the mean and variance of RL fractional integral of multiparameter GCP reduces to that of  RL fractional integral of multiparameter Poisson process (see \cite[Section 3.1]{Vishwakarma2025}).
\end{remark}
The following result gives a compound sum representation of the Riemann integral of multiparameter GCP.
\begin{theorem}
	For $i=1,2,\dots,d$ and $j=1,2,\dots,k$, let $\{N_{ji}(t_i)\}_{t_i\ge0}$'s be independent one parameter Poisson processes with positive rates $\lambda_{ji}$ and $Y_{i1}$, $Y_{i2}$, $\dots$ be independent and identically distributed random variables with uniform distribution on $[0,t_i]$. Then, 
	\begin{equation*}
		\mathscr{X}(\textbf{t})
		\overset{d}{=}\sum_{j=1}^{k}\sum_{i=1}^{d}j\Big(\prod_{\substack{r=1\\r\ne i}}^{d}t_r\Big)\sum_{l=1}^{N_{ji}(t_i)}Y_{il},
	\end{equation*}
	where $ \textbf{t}=(t_1,t_2,\dots,t_d)\in\mathbb{R}^d_+$.
\end{theorem}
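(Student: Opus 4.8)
The plan is to combine the additive representation already obtained for $\mathscr{X}(\mathbf{t})$ with the classical description of the integral of a one-parameter Poisson process as a sum over its arrival epochs.

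First I would invoke the representation derived above in the case $\boldsymbol{\alpha}=\textbf{1}$, which rests on Remark \ref{summpp} together with Remark 3.3 of \cite{Vishwakarma2025}, namely
\begin{equation*}
	\mathscr{X}(\mathbf{t})\overset{d}{=}\sum_{j=1}^{k}\sum_{i=1}^{d}j\Big(\prod_{\substack{r=1\\r\ne i}}^{d}t_r\Big)\int_{0}^{t_i}N_{ji}(s_i)\,\mathrm{d}s_i,
\end{equation*}
where the $\{N_{ji}(t_i)\}_{t_i\ge0}$ are mutually independent one-parameter Poisson processes with rates $\lambda_{ji}$. Thus it suffices to establish, for each fixed pair $(j,i)$, the one-dimensional identity
\begin{equation*}
	\int_{0}^{t_i}N_{ji}(s_i)\,\mathrm{d}s_i\overset{d}{=}\sum_{l=1}^{N_{ji}(t_i)}Y_{il},
\end{equation*}
with $Y_{i1},Y_{i2},\dots$ independent and identically distributed uniform variables on $[0,t_i]$, and then to assemble these identities over $j$ and $i$.

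For the per-process identity I would argue pathwise. Writing $T^{(ji)}_1\le T^{(ji)}_2\le\dots$ for the arrival epochs of $N_{ji}$, we have $N_{ji}(s_i)=\#\{l:T^{(ji)}_l\le s_i\}$, so interchanging the (finite) sum and the integral gives
\begin{equation*}
	\int_{0}^{t_i}N_{ji}(s_i)\,\mathrm{d}s_i=\sum_{l=1}^{N_{ji}(t_i)}\big(t_i-T^{(ji)}_l\big).
\end{equation*}
Now I would condition on $N_{ji}(t_i)=n$ and use the order-statistics property of the homogeneous Poisson process: given $n$ arrivals in $[0,t_i]$, the epochs $T^{(ji)}_1,\dots,T^{(ji)}_n$ are distributed as the order statistics of $n$ independent uniform variables on $[0,t_i]$. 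Since $\sum_{l=1}^{n}(t_i-T^{(ji)}_l)$ is symmetric in the epochs, the ordering is immaterial, and the reflection $U\mapsto t_i-U$ preserves the uniform law on $[0,t_i]$; hence conditionally on $N_{ji}(t_i)=n$ this sum equals $\sum_{l=1}^{n}Y_{il}$ in distribution. Removing the conditioning yields the required per-process identity.

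Finally I would reassemble. Because the $N_{ji}$ are mutually independent, the compound sums built from independent uniform families are again mutually independent and reproduce the independence of the integral terms $\int_{0}^{t_i}N_{ji}(s_i)\,\mathrm{d}s_i$; multiplying by the deterministic weights $j\prod_{r\ne i}t_r$ and summing over $j$ and $i$ then gives the asserted representation. I expect the main obstacle to be the per-process step, whose crux is recognizing the integral as the sum of the arrival epochs and then exploiting the order-statistics and symmetry structure to convert those epochs into an independent uniform sample; the point requiring the most care is maintaining the correct independence across the index blocks $(j,i)$ when the conditional statements are combined.
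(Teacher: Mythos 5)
Your argument is correct and follows the same skeleton as the paper's proof: both begin by invoking Remark \ref{summpp} to replace $\mathscr{M}$ by a weighted sum $\sum_{j=1}^{k}jN_j$ of independent multiparameter Poisson processes, thereby reducing the theorem to a compound-sum representation for the integral of a multiparameter Poisson process over $[\mathbf{0},\mathbf{t}]$. The difference lies in how that building block is treated: the paper disposes of it in one line by citing Theorem 3.2 of \cite{Vishwakarma2025}, whereas you decompose each $N_j$ coordinate-wise, isolate the one-parameter integrals $\int_0^{t_i}N_{ji}(s_i)\,\mathrm{d}s_i$, and prove the key identity from scratch via the pathwise formula $\int_0^{t_i}N_{ji}(s_i)\,\mathrm{d}s_i=\sum_{l=1}^{N_{ji}(t_i)}\bigl(t_i-T_l^{(ji)}\bigr)$ together with the order-statistics property of Poisson epochs and the reflection invariance of the uniform law --- essentially re-deriving the cited result by the same mechanism used in \cite{Orsingher2013}. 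What your route buys is a self-contained proof whose only external input is the standard order-statistics property; what the paper's route buys is brevity. One further point in your favor: your insistence on \emph{independent} uniform families for each index block $(j,i)$ is the correct reading of the statement, whose notation $Y_{il}$ suppresses the dependence on $j$. The compound sums must indeed be independent across $j$ as well as $i$; if a single uniform family were literally shared across $j$ for fixed $i$, the law of the total would change (already its variance would fail to match $\operatorname{Var}(\mathscr{X}(\mathbf{t}))=\sum_{j=1}^{k}j^2\sum_{i=1}^{d}\lambda_{ji}\,(t_i^3/3)\prod_{r\ne i}t_r^2$, the $\boldsymbol{\alpha}=\mathbf{1}$ case of the variance computed in the paper), so flagging this bookkeeping as the delicate step was exactly right.
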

\begin{proof}
	By using Remark \ref{summpp}, we have
	\begin{equation*}
		\mathscr{X}(\textbf{t})\overset{d}{=}\sum_{j=1}^{k}j\int_{0}^{t_1}\dots\int_{0}^{t_d}N_{j}(\textbf{s})\,\mathrm{d}s_1\dots\mathrm{d}s_d,
	\end{equation*}
	where $\{N_{j}(\textbf{s})\}_{\textbf{s}\in\mathbb{R}^d_+}$'s are independent multiparameter Poisson processes with transition rates $\boldsymbol{\Lambda}_j\succ \textbf{0}$. By using Theorem 3.2 of \cite{Vishwakarma2025}, the required result follows.
\end{proof}
\begin{remark}
	For $j=1,2,\dots,k$ and $r=1,2,\dots,d$, let $\{N_{jr}(s_r)\}_{s_r\ge 0}$ be independent one parameter Poisson processes with positive rates $\lambda_{jr}$, respectively. Then, the characteristic function of $\mathscr{X}(\mathbf{t})$ is given by
	{\small	\begin{align}
			&\mathbb{E}(e^{i\eta\mathscr{X}(\mathbf{t})})\nonumber\\
			&=\mathbb{E}\Big(\exp\Big(i\eta\sum_{j=1}^{k}\sum_{l=1}^{d}\Big(\prod_{r\ne l}t_r\Big)\int_{0}^{t_l}jN_{jl}(s_l)\mathrm{d}s_l\Big)\Big)\nonumber\\
			&=\prod_{j=1}^{k}\prod_{l=1}^{d}\mathbb{E}\Big(\exp\Big(i\eta j\Big(\prod_{r\ne l}t_r\Big)\int_{0}^{t_l}N_{jl}(s_l)\mathrm{d}s_l\Big)\Big)\nonumber\\
			&=\exp\Big(i\eta\sum_{j=1}^{k}\sum_{l=1}^{d}\frac{ j\lambda_{jl}t_l^2(\prod_{r\ne l}t_r)}{2}-\frac{\eta^2}{2}\sum_{j=1}^{k}\sum_{l=1}^{d}\frac{j^2\lambda_{jl}t_l^3 (\prod_{r\ne l}t_r^2)}{3}+\sum_{j=1}^{k}j\sum_{l=1}^{d}o(t_l^3)\Big),\label{charMPG}
	\end{align}}
	where the last step follows on using Eq. (4.10) of \cite{Orsingher2013}. Here, $o(t_l^3)/t_l^3\to 0$ as $t_l\to 0$. 
	
	Thus, for sufficiently small $\textbf{t}\in\mathbb{R}^d_+$, the Riemann integral of multiparameter GCP has a Gaussian distribution with mean $\sum_{j=1}^{k}\sum_{l=1}^{d}j\lambda_{jl} (\prod_{r\ne l}t_r)t_l^2/2$ and variance $\sum_{j=1}^{k}\sum_{l=1}^{d}j^2\lambda_{jl}$ $(\prod_{r\ne l}t_r^2)t_l^3/3$.
\end{remark}

\begin{remark}
	For $k=1$, the characteristic function of Riemann integral \eqref{charMPG} reduces to that of multiparameter Poisson process (see \cite[Section 3.1]{Vishwakarma2025}).
\end{remark}
\section{Time-changed variants of  multiparameter GCP}\label{sec4}
In this section, we study two time-changed variants of the multiparameter GCP where the time changing components are multiparameter stable subordinator and its inverse.

\subsection{Time-changed with multiparameter stable subordinator}
Let
$\{\mathscr{M}(\textbf{t})\}_{\textbf{t}\in\mathbb{R}^d_+}$ be a multiparameter GCP with transition parameters $\boldsymbol{\Lambda}_j=(\lambda_{j1},\lambda_{j2},\dots,\lambda_{jd})\succ\textbf{0}$, $j=1,2,\dots,k$. Also, let $\boldsymbol{\mathscr{D}}^{\boldsymbol{\alpha}}(\textbf{t})=(D_1^{\alpha_1}(t_1),D_2^{\alpha_2}(t_2)$, $\dots, D_d^{\alpha_d}(t_d))$, $\boldsymbol{\alpha}=(\alpha_1,\alpha_2,\dots,\alpha_d)$, $\alpha_i\in(0,1)$ be a multiparameter stable subordinator as defined in \eqref{mssdef} such that the component processes $\{D_i^{\alpha_i}(t_i)\}_{t_i\ge 0}$'s are independent stable subordinators. 

Let us consider the following time-changed variant of multiparameter GCP:
\begin{equation}\label{alphmgcp}
	\mathscr{M}^{\boldsymbol{\alpha}}(\textbf{t})\coloneqq\mathscr{M}(\boldsymbol{\mathscr{D}}^{\boldsymbol{\alpha}}(\textbf{t})),\ \textbf{t}\in\mathbb{R}^d_+,
\end{equation}
where  $\{\mathscr{M}(\textbf{t})\}_{\textbf{t}\in\mathbb{R}^d_+}$ is independent of  $\{\boldsymbol{\mathscr{D}}^{\boldsymbol{\alpha}}(\textbf{t})\}_{\textbf{t}\in\mathbb{R}^d_+}$. 
Its pgf $G^{\boldsymbol{\alpha}}(u,\textbf{t})=\mathbb{E}(u^{\mathscr{M}^{\boldsymbol{\alpha}}(\textbf{t})})$, $|u|\le 1$ can be obtained on using the law of iterated expectations as follows:
{\small\begin{align}\label{pgfalpMGCP}
		G^{\boldsymbol{\alpha}}(u,\textbf{t})&=\mathbb{E}(\mathbb{E}(u^{\mathscr{M}^{\boldsymbol{\alpha}}(\textbf{t})}|\boldsymbol{\mathscr{D}}^{\boldsymbol{\alpha}}(\textbf{t})))\nonumber\\
		&=\mathbb{E}\Big(\exp\Big(-\sum_{j=1}^{k}\boldsymbol{\Lambda}_j\cdot\boldsymbol{\mathscr{D}}^{\boldsymbol{\alpha}}(\textbf{t})(1-u^j)\Big)\Big),\ \text{(using \eqref{MPGPGF})}\nonumber\\
		&=\prod_{i=1}^{d}\mathbb{E}\Big(\exp\Big(-\sum_{j=1}^{k}\lambda_{ji}D_i^{\alpha_i}(t_i)(1-u^j)\Big)\Big),\ \text{(as $D_i^{\alpha_{i}}(t_i)$'s are independent)}\nonumber\\
		&=\exp\Big(-\sum_{i=1}^{d}t_i\Big(\sum_{j=1}^{k}\lambda_{ji}(1-u^j)\Big)^{\alpha_i}\Big),
\end{align}}
where the last step follows from the Laplace transform of stable subordinator (see Section \ref{alphastab}). Thus, $\mathbb{E}(\mathscr{M}^{\boldsymbol{\alpha}}(\textbf{t}))^m=\infty$ for all $m\ge1$. For any $i=1,2,\dots,d$, it can be shown that the pgf of $\{\mathscr{M}^{\boldsymbol{\alpha}}(\textbf{t})\}_{\textbf{t}\in\mathbb{R}^d_+}$ solves the following differential equation:
\begin{equation}\label{pgfmgdeq}
	\frac{\partial}{\partial t_i}G^{\boldsymbol{\alpha}}(u,\textbf{t})=-\Big(\sum_{j=1}^{k}\lambda_{ji}(1-u^j)\Big)^{\alpha_i}G^{\boldsymbol{\alpha}}(u,\textbf{t}), \ |u|\le1,
\end{equation}
with $G^{\boldsymbol{\alpha}}(u,\textbf{0})=1$. 

\begin{remark}\label{stabsumm}
	Let  $\boldsymbol{\mathscr{D}}^{\boldsymbol{\alpha}}(\textbf{t})=(D_1^{\alpha_1}(t_1),D_2^{\alpha_2}(t_2),\dots,D_d^{\alpha_d}(t_d))$ be a multiparameter stable subordinator such that  $\{D_i^{\alpha_i}(t_i)\}_{t_i\ge 0}$'s are independent stable subordinators. From Remark \ref{MGasGcp}, it follows that there exist independent GCPs $\{M_1(t_1)\}_{t_1\ge0}$, $\{M_2(t_2)\}_{t_2\ge0}$, $\dots$, $\{M_d(t_d)\}_{t_d\ge0}$ with transition parameters $\Lambda_{j1}$, $\Lambda_{j2}$, $\dots$, $\Lambda_{jd}$, $j=1,2,\dots,k$, respectively, such that
	\begin{equation}\label{stabrekk}
		\mathscr{M}^{\boldsymbol{\alpha}}(\textbf{t})\overset{d}{=}M_1(D_1^{\alpha_1}(t_1))+M_2(D_2^{\alpha_2}(t_2))+\dots+M_d(D_d^{\alpha_d}(t_d)),\, \textbf{t}\in\mathbb{R}^d_+,
	\end{equation} 
	where $\{M_i(t_i)\}_{t_i\ge0}$'s are independent of $\{D_i^{\alpha_i}(t_i)\}_{t_i\ge0}$'s.
	So, the component processes $\{M_i(D_i^{\alpha_i}(t_i))\}_{t_i\ge0}$'s  are independent generalized space fractional counting processes (see \cite[Section 4.1]{Kataria2022}).
	Alternatively, the equality in distribution in \eqref{stabrekk} can be established by evaluating the pgf of $\sum_{i=1}^{d}M_i(D_i^{\alpha_i}(t_i))$, which coincides with \eqref{pgfalpMGCP}.
\end{remark}
Next, we use the result given in \eqref{stabrekk} to obtain the distribution of $\{\mathscr{M}^{\boldsymbol{\alpha}}(\textbf{t})\}_{\textbf{t}\in\mathbb{R}^d_+}$, and to derive its governing system of differential equations.
\begin{theorem}\label{thmalph}
	The distribution $p^{\boldsymbol{\alpha}}(n,\textbf{t})=\mathrm{Pr}\{\mathscr{M}^{\boldsymbol{\alpha}}(\textbf{t})=n\}$, $n\ge0$ is given by 
	{\small	\begin{equation*}
			p^{\boldsymbol{\alpha}}(n,\textbf{t})=\sum_{\Theta(n,d)}\prod_{i=1}^{d}\sum_{\Omega(k,n_i)}\Big(\prod_{j=1}^{k}\Big(-\frac{\lambda_{ji}}{\mu_i}\Big)^{n_{ji}}\frac{1}{n_{ji}!}\Big)\sum_{r_i=0}^{\infty}\frac{(-\mu_{i}^{\alpha_i}t_i)^{r_i}\Gamma(\alpha_ir_i+1)}{r_i!\Gamma(\alpha_ir_i+1-\sum_{j=1}^{k}n_{ji})},
	\end{equation*}}
	where $\mu_i=\lambda_{1i}+\lambda_{2i}+\dots+\lambda_{ki}$, $\Theta (n,d)=\{(n_{1},n_{2}$, $\dots,n_{d}):\sum_{i=1}^{d}n_{i}=n,\, n_{i}\in\mathbb{N}_0\}$ and $\Omega(k,n_i)=\{(n_{1i},n_{2i},\dots,n_{ki}):\sum_{j=1}^{k}jn_{ji}=n_i,\, n_{ji}\in\mathbb{N}_0\}$. Also, for any $i=1,2,\dots,d$, it solves the following system of differential equations:
	\begin{equation}\label{defalstmgp}
		\frac{\partial}{\partial t_i}p^{\boldsymbol{\alpha}}(n,\textbf{t})=-\Big(\sum_{j=1}^{k}\lambda_{ji}(I-B^j)\Big)^{\alpha_i}p^{\boldsymbol{\alpha}}(n,\textbf{t}),\ n\ge0,
	\end{equation} 
	with initial conditions $p^{\boldsymbol{\alpha}}(0,\textbf{0})=1$ and $p^{\boldsymbol{\alpha}}(n,\textbf{0})=0$, $n\ge 1$. Here, $B$ is the backward shift operator, that is, $Bp^{\boldsymbol{\alpha}}(n,\textbf{t})=p^{\boldsymbol{\alpha}}(n-1,\textbf{t})$.
\end{theorem}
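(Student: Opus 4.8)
The plan is to build everything from the distributional identity \eqref{stabrekk}, which reduces the problem to the one-parameter factors $M_i(D_i^{\alpha_i}(t_i))$ and then to a convolution over the partitions in $\Theta(n,d)$. First I would compute the pmf of a single factor $M_i(D_i^{\alpha_i}(t_i))$ by reading off the coefficient of $u^{n_i}$ in its pgf $\exp(-t_i(\sum_{j=1}^k\lambda_{ji}(1-u^j))^{\alpha_i})$, which is the $i$-th factor in \eqref{pgfalpMGCP}. Writing $\sum_{j=1}^k\lambda_{ji}(1-u^j)=\mu_i(1-\sum_{j=1}^k(\lambda_{ji}/\mu_i)u^j)$ and expanding the exponential as $\sum_{r_i\ge0}(-t_i\mu_i^{\alpha_i})^{r_i}/r_i!\cdot(1-\sum_j(\lambda_{ji}/\mu_i)u^j)^{\alpha_i r_i}$, I would apply the generalized binomial theorem to the last factor and then the multinomial theorem to $(\sum_j(\lambda_{ji}/\mu_i)u^j)^m$. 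Collecting the power $u^{n_i}$ forces $\sum_j j n_{ji}=n_i$, that is, the index set $\Omega(k,n_i)$, while $m=\sum_j n_{ji}$; the product $\binom{\alpha_i r_i}{m}\binom{m}{n_{1i},\dots,n_{ki}}$ collapses to $\Gamma(\alpha_i r_i+1)/(\prod_j n_{ji}!\,\Gamma(\alpha_i r_i+1-\sum_j n_{ji}))$, and the sign $(-1)^m$ is absorbed into $\prod_j(-\lambda_{ji}/\mu_i)^{n_{ji}}$, reproducing exactly the $i$-th inner factor of the claimed formula.

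Next, since the factors $M_i(D_i^{\alpha_i}(t_i))$ are independent, the event $\{\mathscr{M}^{\boldsymbol{\alpha}}(\textbf{t})=n\}$ splits over $\Theta(n,d)$, so I would write $p^{\boldsymbol{\alpha}}(n,\textbf{t})=\sum_{\Theta(n,d)}\prod_{i=1}^d\mathrm{Pr}\{M_i(D_i^{\alpha_i}(t_i))=n_i\}$ and substitute the single-factor pmf from the previous step; this yields the stated explicit distribution. The initial conditions are immediate: since $D_i^{\alpha_i}(0)=0$, one has $\boldsymbol{\mathscr{D}}^{\boldsymbol{\alpha}}(\textbf{0})=\textbf{0}$ and hence $\mathscr{M}^{\boldsymbol{\alpha}}(\textbf{0})=\mathscr{M}(\textbf{0})=0$, giving $p^{\boldsymbol{\alpha}}(0,\textbf{0})=1$ and $p^{\boldsymbol{\alpha}}(n,\textbf{0})=0$ for $n\ge1$.

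For the governing system I would start from the already established pgf equation \eqref{pgfmgdeq} and translate it into an equation for the coefficients. Writing $G^{\boldsymbol{\alpha}}(u,\textbf{t})=\sum_{n\ge0}p^{\boldsymbol{\alpha}}(n,\textbf{t})u^n$, multiplication of the pgf by $u^j$ corresponds to the backward shift $B^j$ acting on the pmf, so multiplication by any power series $P(u)$ corresponds to the operator $P(B)$. Expanding the fractional power $(\sum_j\lambda_{ji}(1-u^j))^{\alpha_i}=\mu_i^{\alpha_i}(1-\sum_j(\lambda_{ji}/\mu_i)u^j)^{\alpha_i}$ as a binomial series in $u$ and applying the same correspondence with $u\mapsto B$, I would identify this series with the operator $(\sum_j\lambda_{ji}(I-B^j))^{\alpha_i}$. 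Equating coefficients of $u^n$ on both sides of \eqref{pgfmgdeq} then gives \eqref{defalstmgp}.

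The main obstacle will be the last step: making the formal operator correspondence rigorous, namely justifying that the binomial series defining $(\sum_j\lambda_{ji}(I-B^j))^{\alpha_i}$ converges coefficientwise and that term-by-term extraction of the $u^n$-coefficient from \eqref{pgfmgdeq} is legitimate. This is handled by working on $|u|<1$, where $|\sum_j(\lambda_{ji}/\mu_i)u^j|<1$ (because $\sum_j\lambda_{ji}/\mu_i=1$) so that all the binomial and multinomial expansions converge absolutely and Fubini-type rearrangement applies, allowing the coefficients to be matched; the resulting identity then extends to the boundary by continuity.
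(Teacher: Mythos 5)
Your proposal is correct and follows essentially the same route as the paper: both parts rest on the factorization $p^{\boldsymbol{\alpha}}(n,\textbf{t})=\sum_{\Theta(n,d)}\prod_{i=1}^{d}\mathrm{Pr}\{M_i(D_i^{\alpha_i}(t_i))=n_i\}$ from \eqref{stabrekk}, and your coefficient-matching derivation of \eqref{defalstmgp} from \eqref{pgfmgdeq} via the generalized binomial and multinomial expansions with the $u^j\leftrightarrow B^j$ correspondence (using $p^{\boldsymbol{\alpha}}(n-m,\textbf{t})=0$ for $n<m$) is exactly the paper's computation. The only difference is cosmetic: where the paper imports the single-factor pmf of the generalized space fractional counting process from Eq.~(49) of Kataria, Khandakar and Vellaisamy, you rederive it from the pgf, and your convergence justification (bounding the binomial series by $(1-|u|)^{-\alpha_i r_i}$ so Fubini applies on $|u|<1$) supplies rigor the paper leaves implicit.
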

\begin{proof}
	From \eqref{stabrekk}, we have
	\begin{equation*}
		p^{\boldsymbol{\alpha}}(n,\textbf{t})=\sum_{\Theta(n,d)}\prod_{i=1}^{d}\mathrm{Pr}\{M_i(D_i^{\alpha_i}(t_i))=n_i\}.
	\end{equation*}
	By using the distribution of generalized space fractional counting process (see \cite[Eq. (49)]{Kataria2022}), we obtain the required distribution.
	
	For $i=1,2,\dots,d$, let $\mu_i=\lambda_{1i}+\lambda_{2i}+\dots+\lambda_{ki}$. Then, from \eqref{pgfmgdeq}, we have
	{\small	\begin{align}
			&\sum_{n=0}^{\infty}u^n\frac{\partial}{\partial t_i}p^{\boldsymbol{\alpha}}(n,\textbf{t})\nonumber\\
			&=-\mu_i^{\alpha_i}\Big(1-\frac{1}{\mu_i}\sum_{j=1}^{k}\lambda_{ji}u^j\Big)^{\alpha_i}\sum_{n=0}^{\infty}u^np^{\boldsymbol{\alpha}}(n,\textbf{t})\nonumber\\
			&=-\mu_i^{\alpha_i}\sum_{r_i=0}^{\infty}\binom{\alpha_i}{r_i}\Big(-\frac{1}{\mu_i}\Big)^{r_i}\Big(\sum_{j=1}^{k}\lambda_{ji}u^j\Big)^{r_i}\sum_{n=0}^{\infty}u^np^{\boldsymbol{\alpha}}(n,\textbf{t})\nonumber\\
			&=-\mu_i^{\alpha_i}\sum_{r_i=0}^{\infty}\binom{\alpha_i}{r_i}\Big(-\frac{1}{\mu_i}\Big)^{r_i}\sum_{r_{1i}+r_{2i}+\dots+r_{ki}=r_i}r_i!\prod_{j=1}^{k}\frac{(\lambda_{ji}u^j)^{r_{ji}}}{r_{ji}!}\sum_{n=0}^{\infty}u^np^{\boldsymbol{\alpha}}(n,\textbf{t})\nonumber\\
			&=-\mu_i^{\alpha_i}\sum_{r_i=0}^{\infty}\binom{\alpha_i}{r_i}\Big(-\frac{1}{\mu_i}\Big)^{r_i}\sum_{r_{1i}+r_{2i}+\dots+r_{ki}=r_i}r_i!\prod_{j=1}^{k}\frac{\lambda_{ji}^{r_{ji}}}{r_{ji}!}\sum_{n=0}^{\infty}u^{n+\sum_{j=1}^{k}jr_{ji}}p^{\boldsymbol{\alpha}}(n,\textbf{t})\nonumber\\
			&=-\mu_i^{\alpha_i}\sum_{r_i=0}^{\infty}\binom{\alpha_i}{r_i}\Big(-\frac{1}{\mu_i}\Big)^{r_i}\sum_{r_{1i}+r_{2i}+\dots+r_{ki}=r_i}r_i!\prod_{j=1}^{k}\frac{\lambda_{ji}^{r_{ji}}}{r_{ji}!}\sum_{n=\sum_{j=1}^{k}jr_{ji}}^{\infty}u^{n}p^{\boldsymbol{\alpha}}(n-\sum_{j=1}^{k}jr_{ji},\textbf{t})\nonumber\\
			&=-\mu_i^{\alpha_i}\sum_{r_i=0}^{\infty}\binom{\alpha_i}{r_i}\Big(-\frac{1}{\mu_i}\Big)^{r_i}\sum_{r_{1i}+r_{2i}+\dots+r_{ki}=r_i}r_i!\prod_{j=1}^{k}\frac{\lambda_{ji}^{r_{ji}}}{r_{ji}!}\sum_{n=0}^{\infty}u^{n}p^{\boldsymbol{\alpha}}(n-\sum_{j=1}^{k}jr_{ji},\textbf{t})\label{penstep}\\
			&=-\mu_i^{\alpha_i}\sum_{r_i=0}^{\infty}\binom{\alpha_i}{r_i}\Big(-\frac{1}{\mu_i}\Big)^{r_i}\sum_{r_{1i}+r_{2i}+\dots+r_{ki}=r_i}r_i!\prod_{j=1}^{k}\frac{(\lambda_{ji}B^j)^{r_{ji}}}{r_{ji}!}\sum_{n=0}^{\infty}u^{n}p^{\boldsymbol{\alpha}}(n,\textbf{t})\nonumber\\
			&=-\Big(\sum_{j=1}^{k}\lambda_{ji}(I-B^j)\Big)^{\alpha_i}\sum_{n=0}^{\infty}u^{n}p^{\boldsymbol{\alpha}}(n,\textbf{t}),\label{mmmn}
	\end{align}}
	where \eqref{penstep} follows on using the fact that $p^{\boldsymbol{\alpha}}(n-m,\textbf{t})=0$ for all $n<m$. On comparing the coefficient of $u^n$, $n\ge0$ on both sides of \eqref{mmmn}, we get \eqref{defalstmgp}. This completes the proof.
\end{proof}
\begin{remark}
	Alternatively, by using \eqref{stabrekk}, the system of differential equations given in \eqref{defalstmgp} can be obtained as follows: 
	{\small	\begin{align*}
			\frac{\partial}{\partial t_i}&p^{\boldsymbol{\alpha}}(n,\textbf{t})\\
			&=\frac{\partial}{\partial t_i}\sum_{\Theta(n,d)}\prod_{i=1}^{d}\mathrm{Pr}\{M_i(D_i^{\alpha_i}(t_i))=n_i\}\\
			&=\sum_{\Theta(n,d)}\Big(\prod_{\substack{r=1\\r\ne i}}^{d}\mathrm{Pr}\{M_r(D_r^{\alpha_r}(t_r))=n_r\}\Big)\frac{\partial}{\partial t_i}\mathrm{Pr}\{M_i(D_i^{\alpha_i}(t_i))=n_i\}\\
			&=-\sum_{\Theta(n,d)}\Big(\prod_{\substack{r=1\\r\ne i}}^{d}\mathrm{Pr}\{M_r(D_r^{\alpha_r}(t_r))=n_r\}\Big)\Big(\sum_{j=1}^{k}\lambda_{ji}(I-B^j)\Big)^{\alpha_i}\mathrm{Pr}\{M_i(D_i^{\alpha_i}(t_i))=n_i\}\\
			&=-\Big(\sum_{j=1}^{k}\lambda_{ji}(I-B^j)\Big)^{\alpha_i}p^{\boldsymbol{\alpha}}(n,\textbf{t}),
	\end{align*}}
	where the penultimate step follows on using Eq. (53) of \cite{Kataria2022}. 
\end{remark}

%The proof of following result follows similar lines to that of Theorem 4.4 of \cite{Vishwakarma2025}. Thus, it is omitted.
%\begin{lemma}\label{lemmgcp}
%For $i=1,2,\dots,d$, let $\{N_{i}(D_i^{\alpha_i}(t_i))\}_{t_i\ge0}$, $\alpha_i\in(0,1)$ be independent one parameter counting processes. A multiparameter counting process $\{\mathscr{N}(\boldsymbol{\mathscr{D}}^{\boldsymbol{\alpha}}(\textbf{t}))\}_{\textbf{t}\in\mathbb{R}^d_+}$ defined by $\mathscr{N}(\boldsymbol{\mathscr{D}}^{\boldsymbol{\alpha}}(\textbf{t}))=\sum_{i=1}^{d}N_{i}(D_i^{\alpha_i}(t_i))$, $\textbf{t}=(t_1,t_2,\dots,t_d)\in\mathbb{R}^d_+$ is a multiparameter space fractional Poisson process with transition parameter $\boldsymbol{\Lambda}=(\lambda_{1},\lambda_{2},\dots,\lambda_{d})\succ\textbf{0}$ if and only if  $\{N_{i}(D_i^{\alpha_i}(t_i))\}_{t_i\ge0}$'s are one parameter space fractional Poisson processes with transition rates $\lambda_{i}>0$, respectively.
%\end{lemma}

\begin{remark}
	For $k=1$, the process defined in \eqref{alphmgcp} reduces to a time-changed variant of the multiparameter Poisson process. That is,
	\begin{equation*}
		\mathscr{N}^{\boldsymbol{\alpha}}(\textbf{t})=\mathscr{N}(\boldsymbol{\mathscr{D}}^{\boldsymbol{\alpha}}(\textbf{t})), \ \textbf{t}\in\mathbb{R}^d_+,
	\end{equation*}
	where $\{\mathscr{N}(\textbf{t})\}_{\textbf{t}\in\mathbb{R}^d_+}$ is a multiparameter Poisson process with transition parameter $\boldsymbol{\Lambda}=(\lambda_1,\lambda_2,\dots,\lambda_d)\succ \textbf{0}$ which is independent of  $\{\boldsymbol{\mathscr{D}}^{\boldsymbol{\alpha}}(\textbf{t})\}_{\textbf{t}\in\mathbb{R}^d_+}$. 
	
	Its pgf $\mathcal{G}^{\boldsymbol{\alpha}}(u,\textbf{t})=\mathbb{E}(u^{\mathscr{N}^{\boldsymbol{\alpha}}(\textbf{t})})$, $|u|\le1$ is given by
	\begin{align*}
		\mathcal{G}^{\boldsymbol{\alpha}}(u,\textbf{t})=\exp\Big(-\sum_{i=1}^{d}t_i(\lambda_i(1-u))^{\alpha_i}\Big),
	\end{align*}
	which solves
	\begin{equation*}
		\frac{\partial}{\partial t_i}\mathcal{G}^{\boldsymbol{\alpha}}(u,\textbf{t})=-(\lambda_i(1-u))^{\alpha_i}\mathcal{G}^{\boldsymbol{\alpha}}(u,\textbf{t}),
	\end{equation*}
	with  $\mathcal{G}^{\boldsymbol{\alpha}}(u,\textbf{0})=1$. 
	
	The distribution $q^{\boldsymbol{\alpha}}(n,\textbf{t})=\mathrm{Pr}\{\mathscr{N}^{\boldsymbol{\alpha}}(\textbf{t})=n\}$, $n\ge0$ of multiparameter Poisson process is given by
	\begin{equation*}
		q^{\boldsymbol{\alpha}}(n,\textbf{t})=\sum_{\Theta(n,d)}\prod_{i=1}^{d}\frac{(-1)^{n_i}}{n_i!}\sum_{r_i=0}^{\infty}\frac{(-\lambda_{i}^{\alpha_i}t_i)^{r_i}\Gamma(\alpha_ir_i+1)}{r_i!\Gamma(\alpha_ir_i+1-n_{i})},
	\end{equation*}
	where $\Theta (n,d)=\{(n_{1},n_{2}$, $\dots,n_{d}):\sum_{i=1}^{d}n_{i}=n,\, n_{i}\in\mathbb{N}_0\}$. 
	It solves the following system of differential equations:
	\begin{equation*}
		\frac{\partial}{\partial t_i}q^{\boldsymbol{\alpha}}(n,\textbf{t})=-(\lambda_{i}(I-B))^{\alpha_i}q^{\boldsymbol{\alpha}}(n,\textbf{t}),\ n\ge0, \ \textbf{t}\in\mathbb{R}^d_+,
	\end{equation*} 
	with  $q^{\boldsymbol{\alpha}}(0,\textbf{0})=1$ and $q^{\boldsymbol{\alpha}}(n,\textbf{0})=0$ for all $n\ge 1$.
\end{remark} 
\subsubsection{Time-changed with multivariate stable subordinator} 
Let  $\boldsymbol{\mathscr{D}}^{\boldsymbol{\alpha}}(t)=(D_1^{\alpha_1}(t),D_2^{\alpha_2}(t)$, $\dots, D_d^{\alpha_d}(t))$, $\boldsymbol{\alpha}=(\alpha_1,\alpha_2,\dots,\alpha_d)$, $\alpha_i\in(0,1)$ be a multivariate stable subordinator such that $\{D_i^{\alpha_i}(t)\}_{t\ge0}$'s are independent stable subordinators. We consider  the multiparameter GCP time-changed with a multivariate stable subordinator. That is, 
\begin{equation*}
	\mathscr{M}^{\boldsymbol{\alpha}}(t)\coloneqq\mathscr{M}(\boldsymbol{\mathscr{D}}^{\boldsymbol{\alpha}}(t)),\ t\ge0,
\end{equation*}
where
the multiparameter GCP $\{\mathscr{M}(\textbf{t})\}_{\textbf{t}\in\mathbb{R}^d_+}$ is independent of $\{\boldsymbol{\mathscr{D}}^{\boldsymbol{\alpha}}(t)\}_{t\ge0}$. 

Its Laplace transform is given by $\mathbb{E}(e^{-\eta\mathscr{M}^{\boldsymbol{\alpha}}(t)})=\exp(-t\sum_{i=1}^{d}(\sum_{j=1}^{k}\lambda_{ji}(1-e^{-\eta j}))^{\alpha_i}), \, \eta>0$, and its pgf  $G^{\boldsymbol{\alpha}}(u,t)=\mathbb{E}(u^{\mathscr{M}^{\boldsymbol{\alpha}}(t)})$, $|u|\le 1$ is
\begin{equation*}
	G^{\boldsymbol{\alpha}}(u,t)=\exp\Big(-t\sum_{i=1}^{d}\Big(\sum_{j=1}^{k}\lambda_{ji}(1-u^j)\Big)^{\alpha_i}\Big),
\end{equation*}
which solves the following differential equation:
\begin{equation}\label{pgfmgdeqq}
	\frac{\mathrm{d}}{\mathrm{d}t}G^{\boldsymbol{\alpha}}(u,t)=-\sum_{i=1}^{d}\Big(\sum_{j=1}^{k}\lambda_{ji}(1-u^j)\Big)^{\alpha_i}G^{\boldsymbol{\alpha}}(u,t), \ |u|\le1,
\end{equation}
with $G^{\boldsymbol{\alpha}}(u,0)=1$. 
\begin{remark}
	Note that $\{\mathscr{M}^{\boldsymbol{\alpha}}(t)\}_{t\ge0}$ is a L\'evy process. For $d=1$, it reduces to a space fractional version of the GCP (see \cite[Section 5]{Kataria2022}). For $d=1$ and $k=1$, it reduces to the space fractional Poisson process (see \cite{Orsingher2012}). 
\end{remark}
\begin{remark}\label{alphasgcp}
	For $i=1,2,\dots,d$, let $\{M_i(t_i)\}_{t_i\ge0}$'s be independent GCPs.  From Remark \ref{MGasGcp}, it follows that  $\mathscr{M}^{\boldsymbol{\alpha}}(t)\overset{d}{=}M_1(D_1^{\alpha_1}(t))+$ $M_2(D_2^{\alpha_2}(t))+\dots+M_d(D_d^{\alpha_d}(t))$, where $\{M_i(t_i)\}_{t_i\ge0}$'s are independent of  $\{D_i^{\alpha_i}(t)\}_{t\ge0}$'s. 
\end{remark}

\begin{theorem}
	The distribution $p^{\boldsymbol{\alpha}}(n,t)=\mathrm{Pr}\{\mathscr{M}^{\boldsymbol{\alpha}}(t)=n\}$, $n\ge0$ is given by
	{\small	\begin{equation*}
			p^{\boldsymbol{\alpha}}(n,t)=\sum_{\Theta(n,d)}\prod_{i=1}^{d}\sum_{\Omega(k,n_i)}\Big(\prod_{j=1}^{k}\Big(-\frac{\lambda_{ji}}{\mu_i}\Big)^{n_{ji}}\frac{1}{n_{ji}!}\Big)\sum_{r_i=0}^{\infty}\frac{(-\mu_{i}^{\alpha_i}t)^{r_i}\Gamma(\alpha_ir_i+1)}{r_i!\Gamma(\alpha_ir_i+1-\sum_{j=1}^{k}n_{ji})},
	\end{equation*}}
	where $\mu_i=\lambda_{1i}+\lambda_{2i}+\dots+\lambda_{ki}$, $\Theta (n,d)=\{(n_{1},n_{2}$, $\dots,n_{d}):\sum_{i=1}^{d}n_{i}=n,\, n_{i}\in\mathbb{N}_0\}$ and $\Omega(k,n_i)=\{(n_{1i},n_{2i},\dots,n_{ki}):\sum_{j=1}^{k}jn_{ji}=n_i,\, n_{ji}\in\mathbb{N}_0\}$. It solves the following system of differential equations:
	\begin{equation}\label{defalstmgpm}
		\frac{\mathrm{d}}{\mathrm{d}t}p^{\boldsymbol{\alpha}}(n,t)=-\sum_{i=1}^{d}\Big(\sum_{j=1}^{k}\lambda_{ji}(I-B^j)\Big)^{\alpha_i}p^{\boldsymbol{\alpha}}(n,t),\ n\ge0,
	\end{equation} 
	with $p^{\boldsymbol{\alpha}}(0,0)=1$ and $p^{\boldsymbol{\alpha}}(n,0)=0$ for all $n\ge 1$. Here, $B$ is the backward shift operator, that is, $Bp^{\boldsymbol{\alpha}}(n,t)=p^{\boldsymbol{\alpha}}(n-1,t)$.
\end{theorem}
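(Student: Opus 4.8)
The plan is to mirror the argument used for Theorem \ref{thmalph}, exploiting that the only structural change is the replacement of the $d$ independent time coordinates $t_i$ by a single shared time $t$. This turns the system of partial differential equations of Theorem \ref{thmalph} into a single ordinary differential equation carrying an outer sum over $i$, while leaving the distributional computation essentially unchanged (with $t_i$ replaced by $t$ throughout).

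First I would establish the distribution from the subordination representation in Remark \ref{alphasgcp}. Since $\mathscr{M}^{\boldsymbol{\alpha}}(t)\overset{d}{=}\sum_{i=1}^{d}M_i(D_i^{\alpha_i}(t))$ with the summands $M_i(D_i^{\alpha_i}(t))$ mutually independent, the law of total probability over the index set $\Theta(n,d)$ gives
\[
p^{\boldsymbol{\alpha}}(n,t)=\sum_{\Theta(n,d)}\prod_{i=1}^{d}\mathrm{Pr}\{M_i(D_i^{\alpha_i}(t))=n_i\}.
\]
Substituting the known distribution of the generalized space fractional counting process (Eq. (49) of \cite{Kataria2022}) for each factor $\mathrm{Pr}\{M_i(D_i^{\alpha_i}(t))=n_i\}$ produces the inner sum over $\Omega(k,n_i)$ together with the accompanying series in $r_i$, which is exactly the claimed expression.

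Next I would derive the governing equation from the pgf equation \eqref{pgfmgdeqq}. Writing $G^{\boldsymbol{\alpha}}(u,t)=\sum_{n\ge0}u^{n}p^{\boldsymbol{\alpha}}(n,t)$ on both sides and differentiating term by term, it remains to rewrite each factor $\big(\sum_{j=1}^{k}\lambda_{ji}(1-u^{j})\big)^{\alpha_i}$ acting on the series as the operator $\big(\sum_{j=1}^{k}\lambda_{ji}(I-B^{j})\big)^{\alpha_i}$. For a fixed $i$ this is carried out exactly as in the passage from \eqref{penstep} to \eqref{mmmn}: factor out $\mu_i^{\alpha_i}$, expand $\big(1-\mu_i^{-1}\sum_{j}\lambda_{ji}u^{j}\big)^{\alpha_i}$ by the generalized binomial series, apply the multinomial theorem to $\big(\sum_{j}\lambda_{ji}u^{j}\big)^{r_i}$, and convert each $u^{j}$ into the backward shift $B^{j}$ using $p^{\boldsymbol{\alpha}}(n-m,t)=0$ for $n<m$. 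Because $\tfrac{\mathrm{d}}{\mathrm{d}t}$ distributes over the finite sum $\sum_{i=1}^{d}$, summing the resulting identities in $i$ yields
\[
\sum_{n\ge0}u^{n}\frac{\mathrm{d}}{\mathrm{d}t}p^{\boldsymbol{\alpha}}(n,t)=-\sum_{i=1}^{d}\Big(\sum_{j=1}^{k}\lambda_{ji}(I-B^{j})\Big)^{\alpha_i}\sum_{n\ge0}u^{n}p^{\boldsymbol{\alpha}}(n,t),
\]
and comparing coefficients of $u^{n}$ gives \eqref{defalstmgpm}. The initial conditions follow from $\mathscr{M}^{\boldsymbol{\alpha}}(0)=\mathscr{M}(\boldsymbol{0})=0$.

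The main obstacle is purely formal rather than conceptual: one must justify the rearrangement of the nested series (binomial in $r_i$, multinomial in the $r_{ji}$) defining the fractional power of the shift operator, together with the index shift $n\mapsto n-\sum_{j}jr_{ji}$, exactly as in Theorem \ref{thmalph}. The only genuinely new point here is bookkeeping: since all component subordinators share the time $t$, the single derivative $\tfrac{\mathrm{d}}{\mathrm{d}t}$ produces the outer sum $\sum_{i=1}^{d}$ in \eqref{defalstmgpm} in place of the separate $\partial/\partial t_i$ equations of Theorem \ref{thmalph}, and no additional analytic difficulty arises.
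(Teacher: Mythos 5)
Your proposal is correct and follows essentially the same route as the paper: the distribution is obtained from the representation $\mathscr{M}^{\boldsymbol{\alpha}}(t)\overset{d}{=}\sum_{i=1}^{d}M_i(D_i^{\alpha_i}(t))$ of Remark \ref{alphasgcp} together with Eq.\ (49) of \cite{Kataria2022}, and the governing system \eqref{defalstmgpm} is derived from the pgf equation \eqref{pgfmgdeqq} by the same binomial--multinomial expansion of the fractional shift operator as in Theorem \ref{thmalph}. The paper merely cites these steps without repeating them, whereas you spell out the details (including the harmless bookkeeping point that the shared time $t$ turns the separate $\partial/\partial t_i$ equations into a single ODE with an outer sum over $i$), all of which is accurate.
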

\begin{proof}
	From Remark \ref{alphasgcp}, we have
	\begin{equation*}
		p^{\boldsymbol{\alpha}}(n,t)=\sum_{\Theta(n,d)}\prod_{i=1}^{d}\mathrm{Pr}\{M_i(D_i^{\alpha_i}(t))=n_i\}.
	\end{equation*}
	By using Eq. (49) of \cite{Kataria2022}, we obtain the required distribution.
	Also, by using \eqref{pgfmgdeqq} and following along the similar lines to that of Theorem \ref{thmalph}, we obtain \eqref{defalstmgpm}. This completes the proof.
\end{proof}
\subsection{Time-changed by multiparameter inverse stable subordinators}
Let $\boldsymbol{\mathscr{L}}_{\boldsymbol{\alpha}}(\textbf{t})=(L_1^{\alpha_1}(t_1),L_2^{\alpha_2}(t_2),\dots,L_d^{\alpha_d}(t_d))$, $\boldsymbol{\alpha}=(\alpha_1,\alpha_2,\dots,\alpha_d)$, $\alpha_i\in(0,1)$ be a multiparameter inverse stable subordinator as defined in (\ref{missdef}) such that its component processes are independent. Let $\{\mathscr{M}(\textbf{t})\}_{\textbf{t}\in\mathbb{R}^d_+}$ be a multiparameter GCP with transition parameters $\boldsymbol{\Lambda}_j=(\lambda_{j1},\lambda_{j2},\dots,\lambda_{jd})\succ\textbf{0}$, $j=1,2,\dots,k$. We consider a multiparameter process $\{\mathscr{M}_{\boldsymbol{\alpha}}(\textbf{t}),\ \textbf{t}\in\mathbb{R}^d_+\}$ defined as follows: 
\begin{equation}\label{mfppdef}
	\mathscr{M}_{\boldsymbol{\alpha}}(\textbf{t})\coloneqq\mathscr{M}(\boldsymbol{\mathscr{L}}_{\boldsymbol{\alpha}}(\textbf{t})),\ \textbf{t}\in\mathbb{R}^d_+,
\end{equation}
where $\{\mathscr{M}(\textbf{t})\}_{\textbf{t}\in\mathbb{R}^d_+}$ is independent of $\{\boldsymbol{\mathscr{L}}_{\boldsymbol{\alpha}}(\textbf{t})\}_{\textbf{t}\in\mathbb{R}^d_+}$. We call it the multiparameter fractional GCP with transition parameters $\boldsymbol{\Lambda}_j$. 
\begin{remark}
	For $d=1$ and $k=1$, it reduces to the generalized fractional counting process (see \cite{Kataria2022a}) and the multiparameter fractional Poisson process (see \cite{Vishwakarma2025}), respectively. Also, for $d=k=1$, it reduces to the time fractional Poisson process (see \cite{Meerschaert2011}).
\end{remark}
For $|u|\le 1$, the pgf of $\{\mathscr{M}_{\boldsymbol{\alpha}}(\textbf{t})\}_{\textbf{t}\in\mathbb{R}^d_+}$ is given by
{\small\begin{align}
		G_{\boldsymbol{\alpha}}(u,\textbf{t})&=\mathbb{E}(\mathbb{E}(u^{\mathscr{M}_{\boldsymbol{\alpha}}(\textbf{t})}|\boldsymbol{\mathscr{L}}_{\boldsymbol{\alpha}}(\textbf{t})))\nonumber\\
		&=\mathbb{E}\Big(\exp\Big(-\sum_{j=1}^{k}\boldsymbol{\Lambda}_{j}\cdot\boldsymbol{\mathscr{L}}_{\boldsymbol{\alpha}}(\textbf{t})(1-u^j)\Big)\Big),\ \text{(using \eqref{MPGPGF})}\nonumber\\
		&=\prod_{i=1}^{d}\mathbb{E}\Big(\exp\Big(-\sum_{j=1}^{k}\lambda_{ji}L_i^{\alpha_i}(t_i)(1-u^j)\Big)\Big),\ (\text{as $L_i^{\alpha_{i}}(t_i)$'s are independent})\nonumber\\
		&=\prod_{i=1}^{d}E_{\alpha_i,1}\Big(-t_i^{\alpha_i}\sum_{j=1}^{k}\lambda_{ji}(1-u^j)\Big),\ \textbf{t}\in\mathbb{R}^d_+,\label{pgfinvMgcp}
\end{align}}
where the last step follows on using the Laplace transform of inverse stable subordinator.
\begin{remark}
	The $n$th factorial moment of $\{\mathscr{M}_{\boldsymbol{\alpha}}(\textbf{t})\}_{\textbf{t}\in\mathbb{R}^d_+}$ is
	\begin{align*}
		\mathbb{E}&(\mathscr{M}_{\boldsymbol{\alpha}}(\textbf{t})(\mathscr{M}_{\boldsymbol{\alpha}}(\textbf{t})-1)\dots(\mathscr{M}_{\boldsymbol{\alpha}}(\textbf{t})-n+1))\\
		&=\frac{\partial^n}{\partial u^n}G_{\boldsymbol{\alpha}}(u,\textbf{t})|_{u=1}\\
		&=\sum_{\Theta(n,d)}\frac{n!}{n_1!n_2!\dots n_d!}\prod_{i=1}^{d}\frac{\partial^{n_i}}{\partial u^{n_i}}E_{\alpha_i,1}\Big(-t_i^{\alpha_i}\sum_{j=1}^{k}\lambda_{ji}(1-u^j)\Big)\\
		&=\sum_{\Theta(n,d)}n!\prod_{i=1}^{d}\sum_{r_i=1}^{n_i}\frac{t_i^{r_i\alpha_i}}{\Gamma(r_i\alpha_i+1)}\sum_{n_{i1}+n_{i2}+\dots+n_{ir_i}=n_i}\prod_{l_i=1}^{r_i}\sum_{j=1}^{k}\frac{\lambda_{ji}(j)_{n_{il_i}}}{n_{il_i}!},
	\end{align*}
	where $\Theta(n,d)=\{(n_1,n_2,\dots,n_d):\sum_{i=1}^{d}n_i=n,\, n_i\in\mathbb{N}_0\}$ and $(n)_r=n(n-1)\dots (n-r+1)$.
\end{remark}
\begin{theorem}
	The distribution $p_{\boldsymbol{\alpha}}(n,\textbf{t})=\mathrm{Pr}\{\mathscr{M}_{\boldsymbol{\alpha}}(\textbf{t})=n\}$, $n\ge0$ of multiparameter fractional GCP is given by
	\begin{align}\label{mfppdist}
		p_{\boldsymbol{\alpha}}(n,\textbf{t})
		&=
		\sum_{\Omega(k,n)}	\sum_{\substack{\Theta(n_j,d)\\j=1,2,\dots,k}}\prod_{i=1}^{d}\Big(\sum_{j=1}^{k}n_{ji}\Big)!\Big(\prod_{j=1}^{k}\frac{(\lambda_{ji}t_i^{\alpha_i})^{n_{ji}}}{n_{ji}!}\Big)\nonumber\\
		&\hspace{3.8cm}\cdot E_{\alpha_i,\, \alpha_i\sum_{j=1}^{k}n_{ji}+1}^{\sum_{j=1}^{k}n_{ji}+1}\Big(-t_i^{\alpha_i}\sum_{j=1}^{k}\lambda_{ji} \Big),
	\end{align}
	where $\Omega(k,n)=\{(n_1,n_2,\dots,n_k):\sum_{j=1}^{k}jn_j=n,\, n_j\in\mathbb{N}_0\}$ and $\Theta(n_j,d)=$ $\{(n_{j1},n_{j2},\dots$, $n_{jd}): 0\leq n_{ji}\leq n_j,\, \sum_{i=1}^{d}n_{ji}=n_j\}$ and $E_{\alpha,\,\beta}^\gamma(\cdot)$ is the three parameter Mittag-Leffler function defined in \eqref{mitag}.
	Also, for $i=1,2,\dots,d$, the distribution (\ref{mfppdist}) solves the following system of fractional differential equations:
	\begin{equation}\label{sysdeinvMGCP}
		\mathcal{D}_{t_i}^{\alpha_i}p_{\boldsymbol{\alpha}}(n,\textbf{t})=-\sum_{j=1}^{k}\lambda_{ji}(p_{\boldsymbol{\alpha}}(n,\textbf{t})-p_{\boldsymbol{\alpha}}(n-j,\textbf{t})),\ n\ge0,
	\end{equation}
	where $\mathcal{D}_{t_i}^{\alpha_i}$ is the Caputo fractional derivative defined in \eqref{caputo}.
\end{theorem}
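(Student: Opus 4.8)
The plan is to treat the two assertions separately, reducing both to the corresponding one-parameter results for the generalized fractional counting process (GFCP) $M(L^{\alpha}(t))$ established in \cite{Kataria2022a}. The starting point is the representation $\mathscr{M}_{\boldsymbol{\alpha}}(\textbf{t})\overset{d}{=}\sum_{i=1}^{d}M_i(L_i^{\alpha_i}(t_i))$, obtained by composing the decomposition $\mathscr{M}(\textbf{t})\overset{d}{=}\sum_{i=1}^{d}M_i(t_i)$ of Remark \ref{MGasGcp} with the componentwise time change $\boldsymbol{\mathscr{L}}_{\boldsymbol{\alpha}}(\textbf{t})=(L_1^{\alpha_1}(t_1),\dots,L_d^{\alpha_d}(t_d))$ and using the independence of the component inverse subordinators. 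Equivalently, the product form of the pgf in \eqref{pgfinvMgcp} exhibits $G_{\boldsymbol{\alpha}}(u,\textbf{t})$ as a product of $d$ factors, each being the pgf of a GFCP $M_i(L_i^{\alpha_i}(t_i))$ with rates $\lambda_{1i},\dots,\lambda_{ki}$, so that the independence and the sum representation follow by uniqueness of the pgf.

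First I would prove the distribution formula \eqref{mfppdist}. By the sum representation, $p_{\boldsymbol{\alpha}}(n,\textbf{t})$ equals the $d$-fold convolution $\sum_{m_1+\dots+m_d=n}\prod_{i=1}^{d}\mathrm{Pr}\{M_i(L_i^{\alpha_i}(t_i))=m_i\}$. Substituting the known GFCP distribution from \cite{Kataria2022a}, namely $\mathrm{Pr}\{M_i(L_i^{\alpha_i}(t_i))=m_i\}=\sum_{\Omega(k,m_i)}(\sum_{j=1}^{k}n_{ji})!\,(\prod_{j=1}^{k}(\lambda_{ji}t_i^{\alpha_i})^{n_{ji}}/n_{ji}!)\,E_{\alpha_i,\,\alpha_i\sum_{j}n_{ji}+1}^{\sum_{j}n_{ji}+1}(-t_i^{\alpha_i}\sum_{j}\lambda_{ji})$, turns the convolution into a sum over all nonnegative arrays $(n_{ji})_{1\le j\le k,\,1\le i\le d}$ constrained by $\sum_{j}jn_{ji}=m_i$ and $\sum_i m_i=n$. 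Setting $n_j:=\sum_{i=1}^{d}n_{ji}$, the global constraint collapses to $\sum_{j}jn_j=n$, i.e.\ $(n_1,\dots,n_k)\in\Omega(k,n)$, while for each $j$ the vector $(n_{j1},\dots,n_{jd})$ ranges over $\Theta(n_j,d)$; this reindexing reproduces exactly the double sum in \eqref{mfppdist}.

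Next I would establish the governing system \eqref{sysdeinvMGCP}. The cleanest route is through the pgf. Writing $\phi_i(u)=\sum_{j=1}^{k}\lambda_{ji}(1-u^j)$, the $i$-th factor in \eqref{pgfinvMgcp} is $E_{\alpha_i,1}(-t_i^{\alpha_i}\phi_i(u))$, and I would use the Caputo eigenfunction property $\mathcal{D}_{t_i}^{\alpha_i}E_{\alpha_i,1}(-c\,t_i^{\alpha_i})=-c\,E_{\alpha_i,1}(-c\,t_i^{\alpha_i})$ with $c=\phi_i(u)$; since the remaining factors are independent of $t_i$, this yields $\mathcal{D}_{t_i}^{\alpha_i}G_{\boldsymbol{\alpha}}(u,\textbf{t})=-\phi_i(u)G_{\boldsymbol{\alpha}}(u,\textbf{t})$. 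Expanding $G_{\boldsymbol{\alpha}}(u,\textbf{t})=\sum_{n\ge0}u^np_{\boldsymbol{\alpha}}(n,\textbf{t})$ on both sides, collecting the $u^{n}$ and $u^{n+j}$ contributions, and comparing coefficients of $u^n$ gives \eqref{sysdeinvMGCP}; the initial conditions $p_{\boldsymbol{\alpha}}(0,\textbf{0})=1$ and $p_{\boldsymbol{\alpha}}(n,\textbf{0})=0$ follow from $\mathscr{M}_{\boldsymbol{\alpha}}(\textbf{0})=0$. Alternatively, since each GFCP marginal $\mathrm{Pr}\{M_i(L_i^{\alpha_i}(t_i))=m_i\}$ satisfies the one-parameter Caputo equation of \cite{Kataria2022a}, applying $\mathcal{D}_{t_i}^{\alpha_i}$ to the convolution and noting that it acts only on the $i$-th factor (the others being $t_i$-independent constants) reproduces \eqref{sysdeinvMGCP} after the same shift $m_i\mapsto m_i-j$ that identifies $p_{\boldsymbol{\alpha}}(n-j,\textbf{t})$.

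I expect the main obstacle to be analytic rather than combinatorial: justifying that the nonlocal Caputo operator $\mathcal{D}_{t_i}^{\alpha_i}$ defined in \eqref{caputo} may be exchanged with the infinite series $\sum_{n\ge0}u^np_{\boldsymbol{\alpha}}(n,\textbf{t})$ (equivalently, with the convolution sum defining $p_{\boldsymbol{\alpha}}(n,\textbf{t})$). This requires controlling the $t_i$-integral uniformly in $n$ on compact time sets, which can be handled by the local boundedness and smoothness of the Mittag-Leffler factors together with $|u|\le1$ and dominated convergence; verifying the eigenfunction identity for $E_{\alpha_i,1}$ is standard. The combinatorial reindexing in the distribution step is routine once the bookkeeping $n_j=\sum_i n_{ji}$ is set up carefully.
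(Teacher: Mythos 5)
Your proof is correct, but it follows a different route from the paper's own proof of this theorem. The paper works directly at the level of the multiparameter process: it writes $p_{\boldsymbol{\alpha}}(n,\textbf{t})=\int_{\mathbb{R}^d_+}p(n,\textbf{x})\prod_{i=1}^d l_{\alpha_i}(x_i,t_i)\,\mathrm{d}x_i$, derives the pmf by taking the multivariate Laplace transform in $\textbf{t}$ via \eqref{lapminv} and inverting with the known Laplace transform of the three-parameter Mittag-Leffler function (Eq.\ (1.9.13) of \cite{Kilbas2006}), and obtains the governing system by applying the Riemann--Liouville derivative under the integral, using the density equation \eqref{invsequ}, integrating by parts with the boundary value $l_{\alpha_i}(0,t_i)=t_i^{-\alpha_i}/\Gamma(1-\alpha_i)$, and finally passing from $\partial_{t_i}^{\alpha_i}$ to $\mathcal{D}_{t_i}^{\alpha_i}$. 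You instead reduce everything to the one-parameter GFCP through the representation $\mathscr{M}_{\boldsymbol{\alpha}}(\textbf{t})\overset{d}{=}\sum_{i=1}^d M_i(L_i^{\alpha_i}(t_i))$ and a convolution-plus-reindexing argument; notably, the paper itself records essentially this as an \emph{a posteriori} alternative (Proposition \ref{prpinv} and Remark \ref{reminv}), so there is no circularity, and your reindexing $n_j=\sum_{i=1}^d n_{ji}$ is a genuine bijection between the index set $\{(n_{ji}):\sum_j jn_{ji}=m_i,\ \sum_i m_i=n\}$ and pairs from $\Omega(k,n)\times\prod_j\Theta(n_j,d)$, which is exactly the equivalence the paper asserts between \eqref{mfppdist} and \eqref{MGInAsGdis}. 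The trade-off: the paper's Laplace-transform proof is self-contained and produces the Mittag-Leffler structure from first principles, at the cost of the integration-by-parts bookkeeping and the RL--Caputo correction term $t_i^{-\alpha_i}p_{\boldsymbol{\alpha}}(n,\textbf{t}^i(0))/\Gamma(1-\alpha_i)$; your modular argument is shorter and cleaner but imports the one-parameter pmf and Caputo equation from \cite{Kataria2022a} and \cite{Di Crescenzo2016}, and your pgf route to \eqref{sysdeinvMGCP} additionally requires the term-by-term Caputo differentiation of $\sum_n u^n p_{\boldsymbol{\alpha}}(n,\textbf{t})$, an interchange you correctly flag and which is justifiable by dominated convergence for $|u|\le1$.
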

\begin{proof}
	Let $l_{\alpha_i}(x_i,t_i)$, $x_i\ge0$ be the density of $L_i^{\alpha_i}(t_i)$. Then, by using (\ref{mfppdef}), the distribution of multiparameter fractional GCP can be expressed as  
	\begin{equation}\label{mfppdistpf1}
		p_{\boldsymbol{\alpha}}(n,\textbf{t})=\int_{\mathbb{R}^d_+}p(n,\textbf{x})\prod_{i=1}^{d}l_{\alpha_i}(x_i,t_i)\, \mathrm{d}x_i.
	\end{equation} 
	Also, let $\textbf{w}=(w_1,w_2,\dots,w_d)\succ\textbf{0}$. Then, by using (\ref{lapminv}) and (\ref{MGCPMF}), its Laplace transform can be written as
	{\small	\begin{align}
			\int_{\mathbb{R}^d_+}&e^{-\textbf{w}\cdot\textbf{t}}p_{\boldsymbol{\alpha}}(n,\textbf{t})\,\mathrm{d}\textbf{t}\nonumber\\
			&=\sum_{\Omega(k,n)}\int_{\mathbb{R}^d_+}\Big(\prod_{j=1}^{k}\frac{(\boldsymbol{\Lambda}_j\cdot\textbf{x})^{n_j}}{n_j!}e^{-\boldsymbol{\Lambda}_j\cdot\textbf{x}}\Big)\prod_{i=1}^{d}
			w_i^{\alpha_i-1}e^{-w_i^{\alpha_i}x_i}\,\mathrm{d}x_i\nonumber\\
			&=\sum_{\Omega(k,n)}\sum_{\substack{\Theta(n_j,d)\\j=1,2,\dots,k}}\Big(\prod_{j=1}^{k}\prod_{i=1}^{d}\frac{\lambda_{ji}^{n_{ji}}}{n_{ji}!}\Big)\int_{\mathbb{R}^d_+}\Big(\prod_{j=1}^{k}\prod_{i=1}^{d}x_i^{n_{ji}}e^{-\lambda_{ji}x_i}\Big)\prod_{i=1}^{d}	w_i^{\alpha_i-1}e^{-w_i^{\alpha_i}x_i}\, \mathrm{d}x_i\nonumber\\	
			&=\sum_{\Omega(k,n)}\sum_{\substack{\Theta(n_j,d)\\j=1,2,\dots,k}}\Big(\prod_{j=1}^{k}\prod_{i=1}^{d}\frac{\lambda_{ji}^{n_{ji}}}{n_{ji}!}\Big)\prod_{i=1}^{d}w_i^{\alpha_i-1}\int_{0}^{\infty}x_i^{\sum_{j=1}^{k}n_{ji}}e^{-\sum_{j=1}^{k}\lambda_{ji}+w_i^{\alpha_i}}\, \mathrm{d}x_i \nonumber\\
			&=\sum_{\Omega(k,n)}\sum_{\substack{\Theta(n_j,d)\\j=1,2,\dots,k}}\Big(\prod_{j=1}^{k}\prod_{i=1}^{d}\frac{\lambda_{ji}^{n_{ji}}}{n_{ji}!}\Big)\prod_{i=1}^{d}\frac{w_i^{\alpha_i-1}\Gamma(\sum_{j=1}^{k}n_{ji}+1)}{(w_i^{\alpha_i}+\sum_{j=1}^{k}\lambda_{ji})^{\sum_{j=1}^{k}n_{ji}+1}}.\label{mdist1}
	\end{align}}
	On taking the inverse Laplace transform on both sides of (\ref{mdist1}) and using Eq. (1.9.13) of \cite{Kilbas2006}, we get \eqref{mfppdist}.
	
	Now, on applying the Riemann-Liouville fractional derivative on both sides of (\ref{mfppdistpf1}) and using (\ref{invsequ}), we have
	\begin{align}
		\partial_{t_i}^{\alpha_i}p_{\boldsymbol{\alpha}}(n,\textbf{t})&=-\int_{\mathbb{R}^d_+}p(n,\textbf{x})\partial_{x_i}l_{\alpha_i}(x_i,t_i)\,\mathrm{d}x_i\prod_{r\ne i}l_{\alpha_r}(x_r,t_r)\,\mathrm{d}x_r\nonumber\\
		&=-\int_{\mathbb{R}^d_+}\partial_{x_i}(p(n,\textbf{x})l_{\alpha_i}(x_i,t_i))\,\mathrm{d}x_i\prod_{r\ne i}l_{\alpha_r}(x_r,t_r)\,\mathrm{d}x_r\nonumber\\
		&\ \ +\int_{\mathbb{R}^d_+}\partial_{x_i}p(n,\textbf{x})l_{\alpha_i}(x_i,t_i)\,\mathrm{d}x_i\prod_{r\ne i}l_{\alpha_r}(x_r,t_r)\,\mathrm{d}x_r\nonumber\\
		&=\frac{t_i^{-\alpha_i}}{\Gamma(1-\alpha_i)}\int_{\mathbb{R}^{d-1}_+}p(n,\textbf{x}^i(0))\prod_{r\ne i}l_{\alpha_r}(x_r,t_r)\,\mathrm{d}x_r\nonumber\\
		&\ \ +\int_{\mathbb{R}^d_+}\partial_{x_i}p(n,\textbf{x})l_{\alpha_i}(x_i,t_i)\,\mathrm{d}x_i\prod_{r\ne i}l_{\alpha_r}(x_r,t_r)\,\mathrm{d}x_r,\label{cpf0}
	\end{align}
	where $\textbf{x}^i(0)=(x_1,x_2,\dots,x_{i-1},0,x_{i+1},\dots,x_d)\in\mathbb{R}^d_+$.
	
	In (\ref{cpf0}), we use
	$\partial_{t_i}p(n,\textbf{t})=-\sum_{j=1}^{k}\lambda_{ji}(p(n,\textbf{t})-p(n-j,\textbf{t}))$, $n\ge0$, $i=1,2,\dots,d$
	and
	\begin{equation*}
		\int_{\mathbb{R}^{d-1}_+}p(n,\textbf{x}^i(0))\prod_{r\ne i}l_{\alpha_r}(x_r,t_r)\,\mathrm{d}x_r=p_{\boldsymbol{\alpha}}(n,\textbf{t}^i(0))
	\end{equation*}
	to obtain
	\begin{equation*}
		\partial_{t_i}^{\alpha_i}p_{\boldsymbol{\alpha}}(n,\textbf{t})=-\sum_{j=1}^{k}\lambda_{ji}(p_{\boldsymbol{\alpha}}(n,\textbf{t})-p_{\boldsymbol{\alpha}}(n-j,\textbf{t}))+\frac{t_i^{-\alpha_i}}{\Gamma(1-\alpha_i)}p_{\boldsymbol{\alpha}}(n,\textbf{t}^i(0)).
	\end{equation*}
	This completes the proof.
\end{proof}
\begin{theorem}\label{summfppthm}
	For $i=1,2,\dots,d$ and $\alpha_i\in(0,1)$, let $\{M_1^{\alpha_1}(t_1),\ t_1\ge0\}$, $\{M_2^{\alpha_2}(t_2),\ t_2\ge0\}$, $\dots$, $\{M_d^{\alpha_d}(t_d),\ t_d\ge0\}$ be independent one parameter counting processes. Then, a multiparameter counting process $\{\mathscr{M}_{\boldsymbol{\alpha}}(\textbf{t})\}_{\textbf{t}\in\mathbb{R}^d_+}$ defined by $\mathscr{M}_{\boldsymbol{\alpha}}(\textbf{t})\coloneqq\sum_{i=1}^{d}M_i^{\alpha_i}(t_i)$, $t_i\ge0$ is a multiparameter fractional GCP with transition parameters $\boldsymbol{\Lambda}_j=(\lambda_{j1},\lambda_{j2},\dots,\lambda_{jd})\succ\textbf{0}$, $j=1,2,\dots,k$ if and only if for each $i=1,2,\dots,d$, $\{M_i^{\alpha_i}(t_i)\}_{t_i\ge0}$ is a generalized fractional counting process with transition rates $\lambda_{ji}>0$, $j=1,2,\dots,k$.
\end{theorem}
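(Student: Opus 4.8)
The plan is to characterize the process through its probability generating function and to exploit the independence of the one-parameter components $\{M_i^{\alpha_i}(t_i)\}$. The key fact I would use is that the pgf \eqref{pgfinvMgcp} of a multiparameter fractional GCP already factorizes across coordinates,
\[
G_{\boldsymbol{\alpha}}(u,\textbf{t})=\prod_{i=1}^{d}E_{\alpha_i,1}\Big(-t_i^{\alpha_i}\sum_{j=1}^{k}\lambda_{ji}(1-u^j)\Big),
\]
and that the $i$-th factor $E_{\alpha_i,1}(-t_i^{\alpha_i}\sum_{j=1}^{k}\lambda_{ji}(1-u^j))$ is precisely the pgf of a one-parameter generalized fractional counting process (GFCP) with rates $\lambda_{ji}$, $j=1,2,\dots,k$, and index $\alpha_i$ (obtained by conditioning a GCP on an independent inverse stable subordinator and using its Mittag-Leffler Laplace transform, cf. \cite{Kataria2022a}). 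Thus both directions reduce to comparing a product of one-parameter pgfs with this factorization.

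For the if part, I would first check the three counting-process axioms: $\mathscr{M}_{\boldsymbol{\alpha}}(\textbf{0})=\sum_{i=1}^{d}M_i^{\alpha_i}(0)=0$ since each GFCP vanishes at the origin, while non-negativity and the monotonicity $\mathscr{M}_{\boldsymbol{\alpha}}(\textbf{s})\le\mathscr{M}_{\boldsymbol{\alpha}}(\textbf{t})$ for $\textbf{s}\preceq\textbf{t}$ follow because each coordinate process is a non-negative, non-decreasing counting process and $s_i\le t_i$. Next, using independence of the $M_i^{\alpha_i}(t_i)$'s, the pgf factors as $\mathbb{E}(u^{\mathscr{M}_{\boldsymbol{\alpha}}(\textbf{t})})=\prod_{i=1}^{d}\mathbb{E}(u^{M_i^{\alpha_i}(t_i)})$, and substituting the GFCP pgf into each factor reproduces \eqref{pgfinvMgcp}. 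To upgrade this one-dimensional match to the genuine time-changed structure, I would write each $M_i^{\alpha_i}(t_i)\overset{d}{=}M_i(L_i^{\alpha_i}(t_i))$ with $M_i$ a GCP and invoke Remark \ref{MGasGcp} to assemble the independent $M_i$'s into a multiparameter GCP $\mathscr{M}$, so that $\sum_{i=1}^{d}M_i^{\alpha_i}(t_i)\overset{d}{=}\mathscr{M}(\boldsymbol{\mathscr{L}}_{\boldsymbol{\alpha}}(\textbf{t}))$, i.e. a multiparameter fractional GCP.

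For the only if part, I would suppose $\mathscr{M}_{\boldsymbol{\alpha}}(\textbf{t})=\sum_{i=1}^{d}M_i^{\alpha_i}(t_i)$ is a multiparameter fractional GCP, so that its pgf equals \eqref{pgfinvMgcp}; by independence it also equals $\prod_{i=1}^{d}\mathbb{E}(u^{M_i^{\alpha_i}(t_i)})$. Fixing an index $i$ and setting $t_r=0$ for every $r\ne i$, the relation $M_r^{\alpha_r}(0)=0$ kills the corresponding factors on the left ($\mathbb{E}(u^{M_r^{\alpha_r}(0)})=1$) while $E_{\alpha_r,1}(0)=1$ kills them on the right, leaving $\mathbb{E}(u^{M_i^{\alpha_i}(t_i)})=E_{\alpha_i,1}(-t_i^{\alpha_i}\sum_{j=1}^{k}\lambda_{ji}(1-u^j))$, the pgf of a GFCP with rates $\lambda_{ji}$; since $M_i^{\alpha_i}$ is already a one-parameter counting process, it is a generalized fractional counting process. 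The main obstacle is conceptual rather than computational: unlike Theorem \ref{MPexis}, the inverse stable time change destroys the independent- and stationary-increments structure, so the increment-based induction used there is unavailable, and the entire argument must rest on the coordinatewise product form of \eqref{pgfinvMgcp} together with the independence of the components. The delicate steps are justifying the coordinate-zeroing (which relies on $E_{\alpha,1}(0)=1$ and $M_i^{\alpha_i}(0)=0$) and ensuring that the assembly in Remark \ref{MGasGcp} correctly threads the independent time changes $L_i^{\alpha_i}(t_i)$ through the passage from GCPs to a multiparameter GCP.
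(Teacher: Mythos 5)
Your proposal is correct and takes essentially the route the paper intends: the paper omits the proof, saying only that it follows the lines of Theorem 4.4 of \cite{Vishwakarma2025} in view of the decomposition in Remark \ref{MGasGcp}, and your argument---factorizing the pgf over independent coordinates, matching each factor with the Mittag-Leffler pgf $E_{\alpha_i,1}\big(-t_i^{\alpha_i}\sum_{j=1}^{k}\lambda_{ji}(1-u^j)\big)$ of a generalized fractional counting process, and isolating coordinate $i$ in the only-if part by setting $t_r=0$ for $r\ne i$ (using $M_r^{\alpha_r}(0)=0$ and $E_{\alpha_r,1}(0)=1$)---is precisely that argument made explicit. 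Note only that your identifications (\enquote{is a GFCP}, $\overset{d}{=}$) operate at the level of one-dimensional distributions, which matches the paper's own standard (Proposition \ref{prpinv} is likewise proved by matching pgfs alone), so this is a convention shared with the paper rather than a gap.
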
 
\begin{proof}
	In view of Theorem \ref{MGasGcp}, the proof follows along the similar lines to that of Theorem 4.4 of \cite{Vishwakarma2025}. Hence, it is omitted.
\end{proof}

\begin{proposition}\label{prpinv}
	Let $\{M_i(t_i)\}_{t_i\ge0}$, $i=1,2,\dots,d$ be independent GCPs. Also, let $\boldsymbol{\mathscr{L}}_{\boldsymbol{\alpha}}(\textbf{t})=(L_1^{\alpha_1}(t_1),L_2^{\alpha_2}(t_2),\dots,L_d^{\alpha_d}(t_d))$, $\textbf{t}\in\mathbb{R}^d_+$ be a multiparameter inverse stable subordinator as defined in (\ref{missdef}). Then, 
	\begin{equation}\label{MInvGaGCp}
		\mathscr{M}_{\boldsymbol{\alpha}}(\textbf{t})\overset{d}{=}\sum_{i=1}^{d}M_i(L_{i}^{\alpha_i}(t_i)),
	\end{equation}
	where $\{M_i(t_i)\}_{t_i\ge0}$'s are independent of the inverse stable subordinators $\{L_i^{\alpha_i}$ $(t_i)\}_{t_i\ge0}$'s.
\end{proposition}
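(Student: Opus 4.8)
The plan is to identify both sides of \eqref{MInvGaGCp} through their probability generating functions and then invoke the fact that a pgf uniquely determines the law of an $\mathbb{N}_0$-valued random variable. The left-hand side $\mathscr{M}_{\boldsymbol{\alpha}}(\textbf{t})$ already has its pgf computed in \eqref{pgfinvMgcp}, namely $G_{\boldsymbol{\alpha}}(u,\textbf{t})=\prod_{i=1}^{d}E_{\alpha_i,1}\big(-t_i^{\alpha_i}\sum_{j=1}^{k}\lambda_{ji}(1-u^j)\big)$, so the whole task reduces to showing that $\sum_{i=1}^{d}M_i(L_i^{\alpha_i}(t_i))$ carries exactly this pgf.

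First I would exploit the mutual independence of the GCPs $\{M_i(t_i)\}_{t_i\ge0}$ and of the inverse stable subordinators $\{L_i^{\alpha_i}(t_i)\}_{t_i\ge0}$ to factor $\mathbb{E}\big(u^{\sum_{i=1}^{d}M_i(L_i^{\alpha_i}(t_i))}\big)=\prod_{i=1}^{d}\mathbb{E}\big(u^{M_i(L_i^{\alpha_i}(t_i))}\big)$. For each fixed $i$, I would condition on the value of $L_i^{\alpha_i}(t_i)$ and insert the one-parameter GCP pgf $\mathbb{E}(u^{M_i(s)})=\exp\big(-s\sum_{j=1}^{k}\lambda_{ji}(1-u^j)\big)$, which is the $d=1$ specialization of \eqref{MPGPGF}. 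By the law of iterated expectations this turns the $i$th factor into $\mathbb{E}\big(\exp(-\eta_i(u)\,L_i^{\alpha_i}(t_i))\big)$ with $\eta_i(u)\coloneqq\sum_{j=1}^{k}\lambda_{ji}(1-u^j)$, exactly as in the derivation of \eqref{pgfinvMgcp}.

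The next step is to evaluate this expectation, i.e.\ the Laplace transform of $L_i^{\alpha_i}(t_i)$ in its spatial variable. Using \eqref{lapminv} and inverting the Laplace transform with respect to the time variable gives the standard Mittag-Leffler representation $\mathbb{E}(e^{-\eta L_i^{\alpha_i}(t_i)})=E_{\alpha_i,1}(-\eta t_i^{\alpha_i})$ for $\eta\ge0$. Substituting $\eta=\eta_i(u)$ produces precisely the $i$th factor occurring in \eqref{pgfinvMgcp}, and taking the product over $i=1,2,\dots,d$ recovers $G_{\boldsymbol{\alpha}}(u,\textbf{t})$. Since the pgf of an integer-valued random variable determines its distribution, this establishes \eqref{MInvGaGCp}.

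The only point requiring care — and hence the main, though mild, obstacle — is the admissible range of $u$: the Mittag-Leffler/Laplace identity above is valid only for nonnegative arguments $\eta_i(u)\ge0$, which is guaranteed for real $u\in[0,1]$, since then $1-u^j\ge0$ for every $j$. This is, however, sufficient: agreement of the two generating functions on $[0,1]$ already forces equality of the coefficients in their power-series expansions in $u$, hence equality of the two $\mathbb{N}_0$-valued laws, so no analytic continuation to the full disc $|u|\le1$ is needed. Everything else is the routine conditioning-and-Laplace-transform computation already carried out for \eqref{pgfinvMgcp}.
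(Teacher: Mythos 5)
Your proof is correct and follows essentially the same route as the paper: both arguments factor the pgf of $\sum_{i=1}^{d}M_i(L_i^{\alpha_i}(t_i))$ using independence and match the result with \eqref{pgfinvMgcp}. The only difference is that the paper cites the known pgf of the generalized fractional counting process (Eq. (14) of \cite{Kataria2022a}) for each component, whereas you rederive it by conditioning on $L_i^{\alpha_i}(t_i)$ and invoking $\mathbb{E}\big(e^{-\eta L_i^{\alpha_i}(t_i)}\big)=E_{\alpha_i,1}(-\eta t_i^{\alpha_i})$; your added remark that agreement of the pgfs on $u\in[0,1]$ already determines the laws is a sound (if routine) extra precaution.
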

\begin{proof}
	As $\{M_i(t_i)\}_{t_i\ge0}$'s are independent GCPs and $\{L_i^{\alpha_i}(t_i)\}_{t_i\ge0}$'s are independent inverse stable subordinators,  the component processes $\{M_i(L_i^{\alpha}(t_i))\}_{t_i\ge0}$ in the right hand side of \eqref{MInvGaGCp} are independent generalized fractional counting processes (see \cite{Di Crescenzo2016}). Therefore, by using Eq. (14) of \cite{Kataria2022a}, the pgf of $\{\sum_{i=1}^{d}M_i$ $(L_i^{\alpha_i}(t_i))\}_{t_i\ge0}$ is given by
	{\small	\begin{align*}
			\mathbb{E}(u^{\sum_{i=1}^{d}M_i(L_i^{\alpha_i}(t_i))})&=\prod_{i=1}^{d}\mathbb{E}(u^{M_i(L_i^{\alpha_i}(t_i))})
			=\prod_{i=1}^{d}E_{\alpha_i,1}\Big(-t_i^{\alpha_i}\sum_{j=1}^{k}\lambda_{ji}(1-u^j)\Big),\ |u|\le1,
	\end{align*}}
	which coincides with \eqref{pgfinvMgcp}. This completes the proof.
\end{proof}
\begin{remark}\label{reminv}
	By using \eqref{MInvGaGCp} and Eq. (2.8) of \cite{Di Crescenzo2016}, the distribution function 
	$p_{\boldsymbol{\alpha}}(n,\textbf{t})$
	$=\mathrm{Pr}\{\mathscr{M}_{\boldsymbol{\alpha}}(\textbf{t})=n\}$, $n\ge0$ given in \eqref{mfppdist} can be obtained in the equivalent form:
	{\small	\begin{align}\label{MGInAsGdis}
			p_{\boldsymbol{\alpha}}(n,\textbf{t})&=\sum_{\Theta(n,d)}\prod_{i=1}^{d}\mathrm{Pr}\{M_i(L_i^{\alpha_i}(t_i))=n_i\}\\
			&=\sum_{\Theta(n,d)}	\sum_{\substack{\Omega(k,n_i)\\i=1,2,\dots,d}}\prod_{i=1}^{d}\Big(\sum_{j=1}^{k}n_{ji}\Big)!\Big(\prod_{j=1}^{k}\frac{(\lambda_{ji}t_i^{\alpha_i})^{n_{ji}}}{n_{ji}!}\Big)E_{\alpha_i,\, \alpha_i\sum_{j=1}^{k}n_{ji}+1}^{\sum_{j=1}^{k}n_{ji}+1}\Big(-t_i^{\alpha_i}\sum_{j=1}^{k}\lambda_{ji} \Big),\nonumber
	\end{align}}
	where  $\Omega(k,n_i)=\{(n_{1i},n_{2i},\dots,n_{ki}):\sum_{j=1}^{k}jn_{ji}=n_i,\, n_{ji}\in\mathbb{N}_0\}$ and $\Theta(n,d)=$ $\{(n_{1},n_{2},\dots$, $n_{d}): 0\leq n_{i}\leq n,\, \sum_{i=1}^{d}n_{i}=n\}$. 
	
	Also, by using \eqref{MGInAsGdis}, the system of differential equations given in \eqref{sysdeinvMGCP} can be obtained as follows:
	{\small	\begin{align*}
			\mathcal{D}_{t_i}^{\alpha_i}p_{\boldsymbol{\alpha}}(n,\textbf{t})&=\mathcal{D}_{t_i}^{\alpha_i}\sum_{\Theta(n,d)}\prod_{i=1}^{d}\mathrm{Pr}\{M_i(L_i^{\alpha_i}(t_i))=n_i\}\\
			&=\sum_{\Theta(n,d)}\Big(\prod_{\substack{r=1\\r\ne i}}^{d}\mathrm{Pr}\{M_r(L_r^{\alpha_r}(t_r))=n_r\}\Big)\mathcal{D}_{t_i}^{\alpha_i}\mathrm{Pr}\{M_i(L_i^{\alpha_i}(t_i))=n_i\}\\
			&=\sum_{\Theta(n,d)}\Big(\prod_{\substack{r=1\\r\ne i}}^{d}\mathrm{Pr}\{M_r(L_r^{\alpha_r}(t_r))=n_r\}\Big)\Big(-\sum_{j=1}^{k}\lambda_{ji}\mathrm{Pr}\{M_i(D_i^{\alpha_i}(t_i))=n_i\}\\
			&\hspace{4cm}+\sum_{j=1}^{k}\lambda_{ji}\mathrm{Pr}\{M_i(D_i^{\alpha_i}(t_i))=n_i-j\}\Big)\\
			&=      -\sum_{j=1}^{k}\lambda_{ji}(p_{\boldsymbol{\alpha}}(n,\textbf{t})-p_{\boldsymbol{\alpha}}(n-j,\textbf{t})),\ n\ge0,
	\end{align*}}
	where the penultimate step follows on using Eq. (2.3) of \cite{Di Crescenzo2016}. 
\end{remark}

Thus, by using \eqref{MInvGaGCp} and Remark 2.11 of \cite{Di Crescenzo2016}, the mean and variance of multiparameter fractional GCP are 
\begin{equation*}
	\mathbb{E}(\mathscr{M}_{\boldsymbol{\alpha}}(\textbf{t}))=\sum_{i=1}^{d}\sum_{j=1}^{k}\frac{j\lambda_{ji}t_i^{\alpha_i}}{\Gamma(\alpha_i+1)}
\end{equation*}
and
{\small\begin{equation*}
		\operatorname{Var}(\mathscr{M}_{\boldsymbol{\alpha}}(\textbf{t}))=\sum_{i=1}^{d}\Big(\sum_{j=1}^{k}\frac{j^2\lambda_{ji}t_i^{\alpha_i}}{\Gamma(\alpha_i+1)}+\Big(\sum_{j=1}^{k}j\lambda_{ji}t_i^{\alpha_i}\Big)^2\Big(\frac{2}{\Gamma(2\alpha_i+1)}-\frac{1}{\Gamma^2(\alpha_i+1)}\Big)\Big),
\end{equation*}}
respectively.

\subsubsection{Time-changed with multivariate inverse stable subordinator} 
Let $\boldsymbol{\mathscr{L}}_{\boldsymbol{\alpha}}(t)=(L_1^{\alpha_1}(t)$, $L_2^{\alpha_2}(t),\dots,L_d^{\alpha_d}(t))$ be a multivariate inverse stable subordinator such that the component processes are independent inverse stable subordinators. Then, we consider the following time-changed variant of multiparameter GCP:
\begin{equation*}
	\mathscr{M}_{\boldsymbol{\alpha}}(t)\coloneqq\mathscr{M}(\boldsymbol{\mathscr{L}}_{\boldsymbol{\alpha}}(t)),\ t\ge0,
\end{equation*}
where
the multiparameter GCP $\{\mathscr{M}(\textbf{t})\}_{\textbf{t}\in\mathbb{R}^d_+}$ is independent of $\{\boldsymbol{\mathscr{L}}_{\boldsymbol{\alpha}}(t)\}_{t\ge0}$. 

Its pgf  $G_{\boldsymbol{\alpha}}(u,t)=\mathbb{E}(u^{\mathscr{M}_{\boldsymbol{\alpha}}(t)})$, $|u|\le 1$ is
\begin{equation*}
	G_{\boldsymbol{\alpha}}(u,t)=\prod_{i=1}^{d}E_{\alpha_i,1}\Big(-t^{\alpha_i}\sum_{j=1}^{k}\lambda_{ji}(1-u^j)\Big).
\end{equation*} 
\begin{remark}
	For $d=1$, $\{\mathscr{M}_{\boldsymbol{\alpha}}(t)\}_{t\ge0}$ reduces to the generalized fractional counting process (see \cite{Di Crescenzo2016}, \cite{Kataria2022a}). For $k=1$, it reduces to the multiparameter Poisson process time-changed with multivariate inverse stable subordinator. For $d=k=1$, it reduces to the time fractional Poisson process (see \cite{Beghin2009}). 
\end{remark}
\begin{remark}\label{invalphasgcp}
	Let $\{M_i(t_i)\}_{t_i\ge0}$, $i=1,2,\dots,d$ be independent GCPs.  Then, following along the similar lines to that of Proposition \ref{prpinv}, it can be established that  $\mathscr{M}_{\boldsymbol{\alpha}}(t)\overset{d}{=}\sum_{i=1}^{d}M_i(L_i^{\alpha_i}(t))$, where $\{M_i(t_i)\}_{t_i\ge0}$'s are independent of  $\{L_i^{\alpha_i}(t)\}_{t\ge0}$'s. 
\end{remark}
\begin{proposition}
	The distribution $p_{\boldsymbol{\alpha}}(n,t)=\mathrm{Pr}\{\mathscr{M}_{\boldsymbol{\alpha}}(t)=n\}$, $n\ge0$ is given by
	{\small	\begin{equation*}
			p_{\boldsymbol{\alpha}}(n,t)=\sum_{\Theta(n,d)}\prod_{i=1}^{d}\sum_{\Omega(k,n_i)}\Big(\prod_{j=1}^{k}\Big(-\frac{\lambda_{ji}}{\mu_i}\Big)^{n_{ji}}\frac{1}{n_{ji}!}\Big)\sum_{r_i=0}^{\infty}\frac{(-\mu_{i}^{\alpha_i}t)^{r_i}\Gamma(\alpha_ir_i+1)}{r_i!\Gamma(\alpha_ir_i+1-\sum_{j=1}^{k}n_{ji})},
	\end{equation*}}
	where $\mu_i=\lambda_{1i}+\lambda_{2i}+\dots+\lambda_{ki}$, $\Theta (n,d)=\{(n_{1},n_{2}$, $\dots,n_{d}):\sum_{i=1}^{d}n_{i}=n,\, n_{i}\in\mathbb{N}_0\}$ and $\Omega(k,n_i)=\{(n_{1i},n_{2i},\dots,n_{ki}):\sum_{j=1}^{k}jn_{ji}=n_i,\, n_{ji}\in\mathbb{N}_0\}$. 
\end{proposition}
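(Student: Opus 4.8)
The plan is to collapse the $d$-parameter problem onto the real line by using the additive representation already recorded in Remark \ref{invalphasgcp}, namely $\mathscr{M}_{\boldsymbol{\alpha}}(t)\overset{d}{=}\sum_{i=1}^{d}M_i(L_i^{\alpha_i}(t))$, where the $M_i$'s are independent GCPs, independent of the mutually independent inverse stable subordinators $L_i^{\alpha_i}$. Since the event $\{\mathscr{M}_{\boldsymbol{\alpha}}(t)=n\}$ decomposes according to how the total $n$ is shared among the $d$ independent summands, conditioning on the allocation and invoking independence gives
\begin{equation*}
	p_{\boldsymbol{\alpha}}(n,t)=\sum_{\Theta(n,d)}\prod_{i=1}^{d}\mathrm{Pr}\{M_i(L_i^{\alpha_i}(t))=n_i\},
\end{equation*}
which isolates the one-dimensional factors as the only nontrivial ingredient. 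This is exactly the reduction used in the preceding multivariate stable-subordinator theorem, the sole difference being that the time argument $t$ is now shared across all coordinates rather than varying as $t_i$.

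Next I would substitute the known state probability of each one-dimensional factor. For fixed $i$, the process $\{M_i(L_i^{\alpha_i}(t))\}_{t\ge0}$ is a generalized fractional counting process with rates $\lambda_{1i},\dots,\lambda_{ki}$, so its distribution is available in closed form. Writing $\mu_i=\sum_{j=1}^{k}\lambda_{ji}$ and inserting this closed form into the product above should produce, for each $i$, an inner sum over $\Omega(k,n_i)$ together with the series in $r_i$; collecting the $d$ factors then reproduces the displayed formula. The remaining work—expanding the relevant generating quantities, reindexing the multinomial-type sums, and relabelling the summation variables $n_{ji}$—is the routine bookkeeping that parallels Theorem \ref{thmalph}, and I would not expect it to present any genuine difficulty.

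The step demanding the most care is pinning down the precise closed form of the one-dimensional factor and matching it term-by-term to the summand in the statement. The subtlety here is that the generalized fractional counting process distribution naturally presents itself in the three-parameter Mittag-Leffler form already recorded for the multiparameter case in \eqref{MGInAsGdis} (equivalently \eqref{mfppdist} restricted to a single coordinate), whereas the proposition displays the factor through the gamma-ratio kernel $\Gamma(\alpha_i r_i+1)/\Gamma(\alpha_i r_i+1-\sum_{j=1}^{k}n_{ji})$ that arises for the stable (space-fractional) time-change. Thus the crux of a complete proof is to verify the identity between these two representations of the one-dimensional marginal before the product-and-sum assembly is carried out; once that identification is secured, specializing to the common time $t$ and taking the product over $i=1,\dots,d$ delivers the claimed expression for $p_{\boldsymbol{\alpha}}(n,t)$.
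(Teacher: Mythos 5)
Your opening reduction coincides exactly with the paper's proof: invoke Remark \ref{invalphasgcp} to get $\mathscr{M}_{\boldsymbol{\alpha}}(t)\overset{d}{=}\sum_{i=1}^{d}M_i(L_i^{\alpha_i}(t))$, factor the event over the allocation of $n$,
\begin{equation*}
p_{\boldsymbol{\alpha}}(n,t)=\sum_{\Theta(n,d)}\prod_{i=1}^{d}\mathrm{Pr}\{M_i(L_i^{\alpha_i}(t))=n_i\},
\end{equation*}
and then substitute the known one-dimensional pmf of the generalized fractional counting process, Eq.~(2.8) of \cite{Di Crescenzo2016}. Up to that point you and the paper are in full agreement.

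The gap is in the step you yourself flag as the crux. The ``identity'' you propose to verify --- that the three-parameter Mittag-Leffler form of $\mathrm{Pr}\{M_i(L_i^{\alpha_i}(t))=n_i\}$ equals the gamma-ratio kernel $\sum_{r_i\ge0}(-\mu_i^{\alpha_i}t)^{r_i}\Gamma(\alpha_ir_i+1)/\big(r_i!\,\Gamma(\alpha_ir_i+1-\sum_{j=1}^{k}n_{ji})\big)$ appearing in the displayed formula --- is false, and no reindexing will establish it. That kernel is the pmf of the \emph{space}-fractional factor $M_i(D_i^{\alpha_i}(t))$ (Eq.~(49) of \cite{Kataria2022}, used in Theorem \ref{thmalph}), not of $M_i(L_i^{\alpha_i}(t))$: the two factors have different pgfs, $\exp\big(-t\big(\sum_{j=1}^{k}\lambda_{ji}(1-u^j)\big)^{\alpha_i}\big)$ versus $E_{\alpha_i,1}\big(-t^{\alpha_i}\sum_{j=1}^{k}\lambda_{ji}(1-u^j)\big)$ (cf.\ \eqref{pgfinvMgcp}), and already at $d=k=1$, $n=0$ your candidate identity would read $e^{-\mu^{\alpha}t}=E_{\alpha,1}(-\mu t^{\alpha})$, which fails for $\alpha\in(0,1)$. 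What your (correct) reduction actually produces upon inserting Eq.~(2.8) is \eqref{MGInAsGdis} with every $t_i$ replaced by the common time $t$, i.e.\ the three-parameter Mittag-Leffler expression --- and that is precisely what the paper's one-line proof establishes. The gamma-ratio display in the proposition reproduces verbatim the theorem for the multivariate \emph{stable} subordinator time-change $\mathscr{M}(\boldsymbol{\mathscr{D}}^{\boldsymbol{\alpha}}(t))$ and is inconsistent with the paper's own proof and with \eqref{MGInAsGdis}; so the mismatch you detected is real, but it signals a slip in the stated formula rather than a missing identity. A sound write-up should stop at the Mittag-Leffler form (flagging the discrepancy with the printed statement), not attempt the term-by-term matching you outline, which cannot succeed.
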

\begin{proof}
	From Remark \ref{invalphasgcp}, we have
	\begin{equation*}
		p_{\boldsymbol{\alpha}}(n,t)=\sum_{\Theta(n,d)}\prod_{i=1}^{d}\mathrm{Pr}\{M_i(L_i^{\alpha_i}(t))=n_i\}.
	\end{equation*}
	By using Eq. (2.8) of \cite{Di Crescenzo2016}, we obtain the required distribution. 
\end{proof}
From Remark \ref{invalphasgcp}, the mean and variance of $\{\mathscr{M}_{\boldsymbol{\alpha}}(t)\}_{t\ge0}$ are 
\begin{equation*}
	\mathbb{E}(\mathscr{M}_{\boldsymbol{\alpha}}(t))=\sum_{i=1}^{d}\sum_{j=1}^{k}\frac{j\lambda_{ji}t^{\alpha_i}}{\Gamma(\alpha_i+1)}
\end{equation*}
and
{\small\begin{equation*}
		\operatorname{Var}(\mathscr{M}_{\boldsymbol{\alpha}}(t))=\sum_{i=1}^{d}\Big(\sum_{j=1}^{k}\frac{j^2\lambda_{ji}t^{\alpha_i}}{\Gamma(\alpha_i+1)}+\Big(\sum_{j=1}^{k}j\lambda_{ji}t^{\alpha_i}\Big)^2\Big(\frac{2}{\Gamma(2\alpha_i+1)}-\frac{1}{\Gamma^2(\alpha_i+1)}\Big)\Big),
\end{equation*}}
respectively.

\section*{Acknowledgement}
The first author thanks Government of India for the grant of Prime Minister's Research Fellowship, ID 1003066.

\end{document}